\documentclass[11pt,a4paper]{article}
\pdfoutput=1

\usepackage[english]{babel}
\usepackage{a4wide}
\RequirePackage{ucs}
\RequirePackage[utf8x]{inputenc}
\RequirePackage[T1]{fontenc}

\RequirePackage{amsthm,amsmath}
\usepackage{multirow}
\usepackage{bigdelim}
\RequirePackage{amssymb,amstext}
\RequirePackage{amsfonts}
\RequirePackage{amsbsy}
\RequirePackage{mathrsfs}
\RequirePackage{bm}
\RequirePackage{bbm}
\usepackage{easybmat}
\usepackage{enumerate}
\usepackage[all]{xy}
\usepackage{tikz}
\usetikzlibrary{decorations.pathmorphing}
\usepackage{multirow,bigdelim}
\usepackage{MnSymbol}
\usepackage[round]{natbib}
\usepackage{amsthm}
\usepackage{theoremref}

\usetikzlibrary{positioning,arrows}
\tikzset{
  state/.style={circle,draw,minimum size=6ex},
  arrow/.style={-latex, shorten >=1ex, shorten <=1ex}}

\newcommand{\E}{\mathbb{E}}
\newcommand{\R}{\mathbb{R}}
\newcommand{\cP}{\mathcal{P}}
\newcommand{\N}{\mathbb{N}}
\newcommand{\BR}{\mathfrak{B}_{\mathbb{R}}}

\newcommand{\Z}{\mathbb{Z}}
\newcommand{\A}{\mathcal{A}}
\newcommand{\bP}{\mathbb{P}}
\newcommand{\cE}{\mathcal{E}}
\newcommand{\kS}{\mathfrak{S}}
\newcommand{\cL}{\mathcal{L}}
\newcommand{\U}{\mathcal{U}}

\newcommand{\D}{\mathcal{D}}

\newtheorem{theorem}{Theorem}[section]

\newtheorem{proposition}[theorem]{Proposition}
\newtheorem{lemma}[theorem]{Lemma}
\newtheorem{corollary}[theorem]{Corollary}
\newtheorem{remark}{Remark}
\newtheorem{example}{Example}

\title{Bayesian nonparametric inference for the M/G/1 queueing systems based on the marked departure process}

\usepackage{authblk}

\author[1]{Moritz von Rohrscheidt\thanks{rohrscheidt@uni-heidelberg.de}}
\author[2]{Cornelia Wichelhaus \thanks{wichelhaus@mathematik.tu-darmstadt.de}}

\affil[1]{Ruprecht-Karls-Universit{\"a}t Heidelberg}
\affil[2]{Technische Universit{\"a}t Darmstadt}


\begin{document}


\maketitle

\begin{abstract}
In the present work we study Bayesian nonparametric inference for the continuous-time
M/G/1 queueing system. In the focus of the study is the unobservable service time
distribution. We assume that the only available data of the system are the marked
departure process of customers with the marks being the queue lengths just after
departure instants. These marks constitute an embedded Markov chain whose distribution
may be parametrized by stochastic matrices of a special delta form. We develop the theory
in order to obtain integral mixtures  of Markov measures with respect to suitable prior
distributions. We have found a sufficient statistic with a distribution of a so-called
S-structure sheding some new light on the inner statistical structure of the M/G/1 queue.
Moreover, it allows to update suitable prior distributions to the posterior.  Our
inference methods are validated by large sample results as posterior consistency and
posterior normality. 
\end{abstract}

{\bf Keywords:} Bayesian queues, Bayesian statistics, nonparametric inference, continuous-time queueing model, ergodic theory\\

\section{Introduction}\label{intro}

Considering the rapidly growing areas of \textit{artificial intelligence} and \textit{Industrie 4.0}, strengthening the interface of the fields of operations research and statistics becomes more and more important and contributes to a highly important field of new studies. In this paper we consider statistical inference in the Bayesian paradigm for the continuous-time $M/G/1$ queue. Thereby, on the one hand, the $M/G/1$ queueing model serves as a stochastic tool for modeling a wide range of real world situations as e.g. customers arriving to a check out in a supermarket, cars arriving to a traffic jam, information arriving to a transmitting server, goods arriving to a storage, energy arriving to an electrical grid and many others. On the other hand all these situations are subjected to random fluctuation which have to be modeled stochastically and which need to be inferred statistically. Since people setting up new technology usually have gained prior knowledge from the past or from their expertise, we are convinced that the Bayesian approach to statistics fits the statistical modeling of those situations best. Notice that already Dennis Lindley who contributed a lot to the theory of queueing processes on the one side and was a great advocate of the Bayesian interpretation of statistics on the other hinted to the power of linking these fields, c.f. his foreword in \citet{de1974theory}. Since then a lot of work was done which created a vivid area of research which nowadays is better known as \textit{Bayesian queues} and which the present work gets in line with. It is worth mentioning some of the milestones of this theory. Seminal works were given by \citet{armero1985bayesian} and \citet{mcgrath1987subjective1, mcgrath1987subjective2}. Based on the explicit solvability of the parametric $M/M/1$ model \citet{armero1985bayesian} obtains the posterior law of the traffic coefficient as well as the posterior predictive distribution of the waiting time of customers and the number of customers being present in a stable queue, respectively. \citet{mcgrath1987subjective1, mcgrath1987subjective2} refine aforementioned work by considering a finite waiting room as well as different observational setups also taking into account dependence among the arrival and service time processes. These works form the base for further studies of queueing systems with Markovian characters under several generalizations as e.g. \citet{armero1994prior,armero1998inference, armero2004statistical, armero2006bayesian} and \citet{ausin2007bayesian, ausin2008bayesian}. But also studies for non-Markovian queueing models were undertaken as in \citet{wiper1998bayesian, ausin2007bayesian, ausin2008bayesian} who consider generalizations of the interarrival time process. However, while the assumption of a Poisson process as arrival stream is most often well suited, the assumption of exponential service times is not in most real world applications. Therefore theory was further extended in order to be able to model the service time distribution more flexibly which in turn lead to larger classes of parametric and semi-parametric distributions. See e.g. \citet{insua1998bayesian} who employ parametric Erlang and hyperexponential distributions and \citet{ausin2004bayesian} who exploit the well known probabilistic result that the class of phase-type distributions forms a dense set within the space of all probability distributions  $[$cf.~\citet[p.84]{asmussen2008applied}$]$ in order to design the service time distribution semi-parametrically and inferred subsequently using MCMC procedures. However, assuming semi-parametric classes of distributions are still narrowing the modeling of many real world situations. Hence, nonparametric inference for queueing systems is desirable which leads one to investigate the $M/G/1$ continuous-time queueing model statistically. Inference for the  discrete-time analog of $M/G/1$, namely the $Geo/G/1$ model was considered in \citet{conti1999large}. In this work the author takes the random service time distribution to be sampled from a Dirichlet process prior and develops nonparametric inference for the waiting time distribution which is then validated by large sample results about the respective posterior law. The conceptual basis of \citet{conti1999large} is also shared by \citet{rohrscheidt2017bayesian} who move on to the even more involved continuous-time framework. In that work the authors assign a neutral-to-the-right beta-Stacy prior, which can be viewed a direct generalization of the Dirichlet process prior $[$see e.g. \citet{phadia2015prior}$]$, to the law of the random service time distribution and infer the posterior law of several queueing characteristics using large sample techniques. Thereby the observational setup consists of observations of the arrival stream as well as the service process. Moreover, assuming independence of those processes, results about the joint posterior can be obtained regarding the updates of these two processes separately. However, in many situations neither the arrival stream is directly observable nor is the service process. In those cases the only observations one has access to are constituted of the departure process. But since the interdeparture times alone are non-informative for an $M/G/1$ queueing system in steady state, additional information has to be taken into account. In the present work we assume that this information is encoded in form of marks the departure point process carries. Yet, a similar independence assumption among the interdeparture times and the marks cannot be argued. This fact gives rise to study the inner structure of the $M/G/1$ system from the viewpoint of a subjectivistic probabilist. Roughly speaking this means that Markov laws governed by stochastic matrices possessing a certain shape need to be mixed up by a probability distribution which then forms the prior of the system. Furthermore, a sufficient statistic needs to be found which can be used to update this prior to the posterior. These two issues are addressed in this paper. \\

The paper is organized as follows. In section 2 the details of the underlying queueing model are introduced as well as some basic assumptions which enable us to infer characteristics of the system as e.g. the distributions of the service times and the waiting times based on mere observations of the marked departure process. Section 3 deals with the issue of assigning suitable priors for the random interdeparture time distribution and the random law which governs the marks of the departure process, respectively. Therefor a sufficient statistic is given which summarizes the information properly in order to build the posterior and some examples are given to strengthen the comprehension of this statistic. Section 4 is devoted to inference. This encompasses the development of estimators for the service time distribution and for the distributions of further queueing characteristics, respectively, based on some functional relationship of their Laplace-Stieltjes transforms. Moreover those inference methods are supported by results about posterior consistency as well as posterior normality.


\section{Description of the queueing model and statistical assumptions}\label{model}

The model under consideration is the basic $M/G/1$ queueing model, where $M$ refers to the input stream modeled as a Poisson process with homogenous rate $\lambda>0$, $G$ to a general probability distribution for the service times provided by $1$ reliable server. The policy according which customers are served is assumed to be first in first out (FIFO). We assume throughout, that the queueing system has already reached its steady state, which makes the process of successive  waiting times of customers a stationary process. Letting the traffic intensity be the ratio of the mean service time and the mean interarrival time, a sufficient and necessary condition for the system to reach equilibrium is that the traffic intensity is strictly smaller than one. Equivalently, the probability to find the system unoccupied has to be strictly greater than zero, see e.g. \citet{asmussen2008applied}.
We recall some interesting features of the $M/G/1$ system which motivate our assumptions on the statistical observations which are stated later in this section. \\

The first feature is the PASTA property. PASTA, abbreviating \textit{Poisson-Arrivals-See-Time-Averages}, means that the probability to find the (stable) system in a certain state at an arbitrary time point is equal to the probability that any arriving customer finds the system in this certain state. Note that PASTA strongly depends on the lack of anticipation assumption (LAA) of the arrival stream. Roughly speaking,  LAA means that future increments of the arrival process are not affected by the present state $s\in\N_0$ of the queueing system. This assumption is clearly met by the Poisson process governing the arrivals of customers. For more details on LAA and PASTA see \citet{wolff1982poisson}. Thus, from a statistical point of view, if one is interested in making inference for the queueing system based on consecutive observations, there is no difference in collecting respective data at instances of arrivals or arbitrarily chosen instances. The second property of stable $M/G/1$ systems is the so called level crossing law (LC). LC says that the limiting fraction of arriving customers seeing $s\in\N_0$ customers in front equals the limiting fraction of departing customers leaving $s$ customers behind. See \citet{brill2008level} for an exhaustive treatment of level crossing laws. Thus, as long as the queueing system has attained equilibrium, we conclude by PASTA and LC that the law of the system size at arbitrary instances of time equals the law of the system size at departure times of customers. This will play a crucial role in our statistical treatment. The third property we briefly review is perhaps even more unexpected at first glance. It says that in steady state the departure process from the queue, i.e. the continuous time stochastic process given by the random departure time points, is a Poisson process with the same intensity as the arrival process. This relies on the fact that any stable (in the literature in abuse of notation often called \textit{ergodic}) birth death process is time-reversible, c.f. \citet[page 114]{asmussen2008applied}. Thus, the reversed system evolves as the original $M/G/1$ which in turn implies a Poisson process for the departure stream with the same intensity as the arrival process. However, the latter property affects our statistical viewpoint on the system. Since we assume that we observe a system in steady state, solely observing the departure stream is completely non-informative with respect to the unknown service time distribution. Hence, we will need an additional observation that provides enough information to learn about the random service time distribution.\\

To motivate the choice of these additional observations, we briefly review some of the theory about the $M/G/1$ model which requires us to introduce some notation. Let's pick randomly a customer which will refer to as customer $=$. Further, for an integer $n$, let $S_n$ denote the random service time of the $n^{th}$ customer, $T_n$ the instance of time at which customer $n$ departs from the system and $D_n:=T_{n+1}-T_n$ be the interdeparture time between the $n^{th}$ and the $(n+1)^{st}$ customer. Furthermore, let $A_n$, denote the interarrival time between the $n^{th}$ and $(n+1)^{st}$ customer and $A_{S_n}$ be the number of customers entering the system during the service of customer $n$. Since all of the above random variables do not depend on a specific customer as long as the system is in steady state, we feel free to write them without any index. Note that by above mentioned properties of $M/G/1$, we have $\mathcal{L}[A]=\mathcal{L}[D]=\mathcal{E}(\lambda)$, where $\mathcal{L}$ stands for the the law of some random quantity and $\mathcal{E}(\lambda)$ for the exponential distribution with rate $\lambda>0$. Let $\cP(\mathcal{S})$ denote the space of all probability measures on some Polish space $\mathcal{S}$, $G\in \cP(\R_+)$ be the general distribution of the service times and $(\Omega,\A,\bP)$ denote an arbitrary probability space which represents the domain of all random quantities being involved. Now, by the law of total probability one has for $k\in\N_0$

\begin{align*}
\bP(A_S=k)=\int\limits_0^{\infty} \bP(A_S=k|S\leq t) G(dt)= \frac{1}{k!}\int\limits_0^{\infty} e^{-\lambda t} (\lambda t)^k G(dt),
\end{align*}
which in turn yields
\begin{align}\label{LST}
a(z):=\sum\limits_{k=0}^{\infty} z^k \bP(A_S=k)=\int\limits_0^{\infty} e^{-\lambda t} \left[\sum\limits_{k=0}^{\infty} \frac{(z \lambda t)^k}{k!}\right] G(dt)=\int\limits_0^{\infty} e^{-\lambda[1-z]t} G(dt)=:g(\lambda\left[1-z\right]), 
\end{align}
using the monotone convergence theorem.
Hence, a functional relationship of the probability generating function (p.g.f.) $a(\cdot)$ of $A_S$, the departure rate $\lambda$ and the Laplace-Stieltjes transform (LST) $g(\cdot)$ of the service time distribution $G$, is obtained. This functional relationship will be the starting point for our inferential analysis.\\

We continue by regarding $A_S$ with respect to possible observations of the system.
Therefor let $\{N(t)\}_{t\in \R}$ denote the stochastic process with state space $\N_0$ that describes the number of customers in the system at time $t$. It is plain that in general $N(t)$ is not a Markov process since the entire history of this process is informative for a next outcome. The only situation in which $N(t)$ can be considered Markovian is when $G=\mathcal{E}$, which is due to the memorylessness property of the exponential distribution. However, a ``sub-process'' of $N$ can be found that is a Markov chain and thus makes $N$ a semi-Markov process. Call $N(t)$ a semi-Markov process with state space $\N_0$ according to the following construction. Assume there is a stochastic kernel $\kappa: \N_0\times (\mathfrak{B}_{\N_0} \otimes \BR) \rightarrow [0,1]$ and a discrete-time stochastic process $(\tau_n)_{n\in\Z}$ such that $\bP$-a.s. $\tau_n<\tau_{n+1}$ and $N(t)=c_{n+1}$, for all $t\in[\tau_n,\tau_{n+1})$ and for some sequence of states $(c_i)_{i\in \Z}$. Define a two-component discrete-time stochastic process $Y$ by $Y_n:=(c_n, \tau_n-\tau_{n-1})$. Then, $N(t)$ is a semi-Markov process with kernel $\kappa$ if $\bP(Y_{n+1}=y_{n+1}| Y_i; i\leq n)=\bP(Y_{n+1}=y_{n+1}|Y_n)=\kappa(c_n,y_{n+1})$.\\

The process $N(t)$ meets this definition since the system size is constant on intervals where no departure and no arrival occur. The holding times $\tau_n$ then are seen to be the minimum of the residual service time of the customer currently being served (if there is any) and the time elapsing until the next arrival of a new customer. Although, due to the memorylessness property of the exponential distribution, the time until the next arrival is an exponential distribution, the description of the entire holding time distribution is difficult. However, it shall become easier if we circumvent taking into account the residual service time. That means we observe the system at instances at which customers depart from the system, i.e. at instances $(T_n)_{n\in\Z}$. Indeed, it turns out that the process $\{N(T_n),T_n\}_{n\in\Z}$ is a Markov chain, the so called embedded Markov chain. To see this, note that 
\begin{align*}
&N(T_{n+1})=N(T_n)+A_{S_{n+1}}-\left[1-\delta_{N(T_n)}(\{0\})\right],\\
&T_{n+1}=[1-\delta_{N(T_n)}(\{0\})](T_n+S_{n+1})+\delta_{N(T_n)}(\{0\})(T_n+S_{n+1}+I_{n+1}),
\end{align*}
where $I_{n+1}\sim \cE(\lambda)$ reflects the exponentially distributed remainder of the inter-arrival time between the $n^{th}$ and $(n+1)^{st}$ customer, which is independent of $\{(N(T_i),T_i):i<n\}$ as well as $S_{n+1}$ is by assumption. If one focuses solely on the Markov chain $\{N(T_n)\}_n$, above equation gives rise to the stochastic matrix governing the chain $N:=\{N(T_n)\}_n$. Let $M=(m_{ij})_{i,j\in \N_0} \in\kS$ denote the infinite matrix consisting of all probabilities of transitions of the chain $N$ from state $i$ to state $j$, where $\kS$ denotes the space of all infinite stochastic matrices. Then, the probability of having a transition from $i$ to $j$ is given through
\begin{align*}
m_{ij}=
\begin{cases} \bP\left(A_S=j-i+[1-\delta_{i}(\{0\})]\right) & \textit{, if } i-j\leq 1\\
              0 &\textit{, esle }
\end{cases},
\end{align*}
which yields the following form of $M$.
\begin{align*}
M=\begin{pmatrix} 
a_0&a_1&a_2&a_3&a_4&a_5&\cdots\\
a_0&a_1&a_2&a_3&a_4&a_5&\cdots\\
0&a_0&a_1&a_2&a_3&a_4&\cdots\\
0&0&a_0&a_1&a_2&a_3&\cdots\\
0&0&0&a_0&a_1&a_2&\cdots\\
\vdots&\ddots&\ddots&\ddots&\ddots&\ddots&\ddots
\end{pmatrix},
\end{align*}
where $a_i=\bP(A_S=i)$ for $i\in\N_0$.\\

Note that this stochastic matrix is a member of a larger family of stochastic matrices, the so called $\Delta$-matrices which were introduced in \citet{abolnikov1991markov}. To be more precise, $M$ is a positive homogenous $\Delta_{1,1}$, where the first $1$ indicates that $p_{ij}=0$ for all $i-j>1$ and the second that this holds for all rows $i>1$. Homogeneity thereby means that $m_{ij}=a_{j-i+1}$ for all $i>1$ and positivity that $a_{j-i+1}>0$ whenever the index is non-negative. Notice that positivity is given since the number of customers who enter the system during a service time is not bounded, even if its probability might decay at a fast rate. Moreover, \citet{abolnikov1991markov} study the (Markovian) ergodicity of Markov chains governed by some $\Delta$-matrix and obtain necessary and sufficient conditions for that in terms of the p.g.f. of the discrete distribution $(a_i)_{i\in\N_0}$. For the stochastic matrix $M$ considered here, it is easy to see that it is irreducible and aperiodic since 
\begin{align*}
M^k=
\left(
\begin{array}{*8c}
x & x& x& x& x& x & \cdots &\rdelim){9}{4em}\\
 x & x &x & x&x &x & \cdots& \\
 x & x& x &x &x&x & \cdots &\rdelim\}{3}{1cm}[$(k-1)$-times]\\
\vdots&\vdots&\vdots&\vdots&\vdots&\vdots&\cdots&\\
x&x&x&x&x&x&\cdots&\\
0 &x &x &x &x & x &\cdots\\
0 &0 &x &x &x & x &\cdots\\
0 &0 &0 &x &x & x &\cdots\\
\vdots&\ddots&\ddots&\ddots&\ddots&\ddots&\ddots&
\end{array}\right. ,
\end{align*}
where an entry $x$ means an entry strictly greater than zero. Moreover, Theorem 3.4. in \citet{abolnikov1991markov} shows that the Markov chain $N$ governed by above matrix $M$ is positive recurrent if and only if $\frac{\partial a(z)}{\partial z}_{|_{z=1}}=:a'(1)<1$. Since $a'(1)=\lambda \E[S]=\E[S]/\E[I]=:\rho$ by \eqref{LST}, this requirement becomes the common condition in queueing theory for stability of the $M/G/1$ system. It is well known from the theory of Markov chains that for $M$ of above form with $a'(1)<1$ there exists a unique $M$-invariant distribution $p\in\cP(\N_0)$, i.e. $p$ is a stochastic row vector with all entries strictly positive such that $p M=p$. Letting $\pi(\cdot)$ denote the p.g.f. of $p$, this yields
\begin{align*}
\pi(z)=a(z)\frac{(1-z)(1-a'(1))}{a(z)-z},
\end{align*}
c.f. \citet{nelson2013probability}. All of the aforementioned facts about $M/G/1$ lead us to the following assumption on the accessible observations. 
\begin{align}\label{O}
\textsf{Data is collected from observations of the stochastic process } \{N(T_n),T_n\}_{n\in \N}. \tag{O}
\end{align}
Assumption \eqref{O} means that we observe the instants of time at which the consecutively served customers depart from the system as well as the number of customers they leave behind in the system. Thus, one can imagine the $M/G/1$ system as a perfect black box. The only thing to which we have access is the departure process being a marked point process. Due to time-reversability, this point process is a marked Poisson process with rate $\lambda>0$ and with marks consisting of the system size at departure time points. Notice that the inter departure times do not directly depict the service times since they are possibly corrupted by idle times under the promises that $\rho<1$.\\

We are interested in making Bayesian statistical inference for several characteristics of the system on basis of these observations. The characteristics are e.g. the unknown service time distribution, waiting time distribution and the distribution of the busy and idle times.


\section{Prior assignments}\label{prior}

In this section we assign prior distributions to the laws of the random quantities which appeared in section \ref{model}. The observations \eqref{O} from the previous section already indicate that the issue is twofold. Firstly, we can extract the interdeparture times $D_n:=T_n-T_{n-1}$ from the observations. As argued in the previous section, these can be viewed as independent and exponentially distributed random quantities with rate $\lambda$, leading to a parametric inference problem. On the other hand, the marks $\{N(T_n)\}_n$ were argued to form a Markov chain with stochastic $\Delta_{1,1}$-matrix, leading to a non-parametric inference problem. However, from a subjectivist Bayesian viewpoint, data is not independent or Markov, respectively. As one needs to express one's uncertainty about the law of the random quantities, data is rather assumed to be exchangeable or partially exchangeable, respectively. These two issues are now studied in detail separately.

\subsection{Prior for the interdeparture time distribution}
For a sequence of random quantities $X_{[1:\infty]}:=(X_n:\Omega \rightarrow \R)_{n\in\N}$, say that $X_{[1:\infty]}$ is exchangeable if for any $n\in\N$ and any permutation $\sigma$ of $n$ elements it holds that 

\begin{align*}\label{E}
\cL\left[X_1,\dotsc X_n\right]=\cL\left[X_{\sigma(1)},\dotsc, X_{\sigma(n)}\right]. \tag{E}
\end{align*}

Then, the celebrated de Finetti theorem for Polish spaces $[$see \citet{hewitt1955symmetric}$]$ gives rise to a prior distribution $\mu\in\cP(\cP(\R))$. To be more precise, $X_{[1:\infty]}$ is exchangeable if and only if there is a unique measure $\mu$ such that
\begin{align*}
\bP(X_i\in A_i; i=1,\dots,n)=\int\limits_{\cP(\R)} \prod\limits_{i=1}^n P(X_i\in A_i) \mu(dP),
\end{align*}
where $A_i\in\BR$ for all $i=1,\dots,n$, for all $n\in\N$. Note that $\cP(\R)$ is a Polish space if it is assumed to be equipped with the sigma field induced by weak convergence of measures, see \citet{kechris1995descriptive,billingsley2013convergence}. The introduced sigma field on $\cP(\R)$ is the smallest one with respect to which the mappings $P\mapsto P(A)$ are measurable for all sets $A\in\BR$. Notice that in Bayesian statistics the assertion of the de Finetti theorem is frequently interpreted as random variables being conditionally i.i.d. given their common distribution $P$. This indicates that exchangeable data $X_{[1:\infty]}$ are not independent unconditionally on $P$. Note the conceptual difference to the frequentist approach. Indeed, a result of \citet{kingman1978uses} shows that in general the exchangeable random variables are positively correlated, a fact that makes Bayesian statistics a theory of inferential prediction and forecasting. 
Moreover, \citet{diaconis1985quantifying} show that the additional symmetry assumption 
\begin{align*}\label{exp}
\bP(X_i\in A_i; i=1,\dots,n)=\bP(X_i\in C_i; i=1,\dots,n),\tag{$\exp$}
\end{align*}
for all $A_i\in\BR^+$ and constants $c_i\in\R$ such that $\sum_{i=1}^nc_i=0$ and $C_i=c_i+A_i=\{c_i+a: a\in A_i\}\in\BR^+$, $i=1,\dots,n$, shrinks the support of the mixing measure $\mu$ to the set of all exponential distributions denoted as $\mathcal{E}$. Thus, under the assumption \eqref{E} and \eqref{exp}
\begin{align*}
\bP(X_i\in A_i; i=1,\dots,n)=\int\limits_{\cE} \prod\limits_{i=1}^n P(X_i\in A_i) \mu(dP)=\int\limits_{\R^+} \prod\limits_{i=1}^n \left[\int_{A_i} \lambda e^{-\lambda x_i} dx_i \right]\tilde{\mu}(d\lambda),
\end{align*}
where the natural parametrization $\tilde{} : \cE\rightarrow \R^+; P\mapsto (\int_{\R}xdP)^{-1}$ is used. Furthermore, \citet{diaconis1985quantifying} show that if the random variables $X_{[1:\infty]}$ fulfill a certain regression equation given through $\E[X_2|X_1]=\alpha X_1+\beta$ for some constants $\alpha,\beta\in\R$, then the mixing measure $\tilde{\mu}$ is a gamma distribution.\\

Since the stable $M/G/1$ system gives rise to i.i.d. exponential interdeparture times, from the subjectivist Bayesian viewpoint, we make  the following assumption on the observations $D_{[1:n]}$. The data $D_{[1:n]}$ are assumed to be the first $n$ projections of an infinite exchangeable sequence of random variables $D_{[1:\infty]}$. Moreover, 
\begin{align*}
D_{[1:\infty]} | \lambda &\sim \cE(\lambda)^{\bigotimes_{\N}}\\
\lambda &\sim \Gamma(a,b)
\end{align*}
Thereby, $\Gamma(a,b)$ denotes the gamma distribution with parameters $a,b>0$.
Since the gamma distribution is well known to be a conjugate prior for the rate of exponential distributions, it follows easily that
\begin{align*}
\lambda | (a,b),D_{[1:n]} &\sim \Gamma\left(a+n,b+\sum_{i=1}^n D_i\right)\\
\E_{\Gamma}\left[\lambda | D_{[1:n]},(a,b)\right]&= \frac{a+n}{b+\sum_{i=1}^n D_i}\\
f\left(a_{n+1}|D_{[1:n]},(a,b)\right)&=\frac{(a+n)\left(b+\sum_{i=1}^n D_i\right)^{a+n}}{\left(b+\sum_{i=1}^n D_i+a_{n+1}\right)^{a+n+1}}\\
\E\left[D_{n+1}|(a,b),D_{[1:n]}\right]&=\frac{1}{a+n-1}\sum_{i=1}^n D_i +\frac{b}{a+n-1}.
\end{align*}
Note that the latter equation again reflects the learning process which is not directly available in the frequentistic approach. For $n=1$ this is given by $\E[D_2|D_1,(a,b)]=1/a D_1 + b/a$ as required in above assumption on the shape of the prior.

\subsection{Prior for the law of the embedded Markov chain}

Eliciting a prior distribution for the law of the marks $\{N(T_n)\}_n$ of the marked departure process described in section 2 is a more involved task. However, \citet{freedman1962invariants, diaconis1980finetti} show hat an analogue of the de Finetti theorem holds for laws of Markov chains. Let $Y_{[1:\infty]}:\Omega \rightarrow \N_0^{\N}$ be a discrete-time stochastic process with state space $\N_0$. $Y_{[1:\infty]}$ is called partially exchangeable if for all $n\in\N$
\begin{align}\label{PE}
\cL\left[Y_{[1:n]}|t_n(Y_{[1:n]})=r\right]=\U_{t_n^{-1}(r)} \tag{PE}.
\end{align}
Thereby, $Y_{[1:n]}=(Y_1,\dots,Y_n)$, $\U_{B}$ denotes the uniform law on a discrete set $B$ and $t:=(t_n)_{n\in\N}$ is a statistic, i.e. a family of measurable mappings, defined by
\begin{align*}
&(i_1,i_2,\dots,i_n)\mapsto t_n((i_1,i_2,\dots,i_n)):=\left(i_1,T\right),\\
\end{align*}
where $T=(t_{rs})_{r,s\in\N_0}$ is the transition count matrix defined by $t_{rs}:=\#\{j: (i_j,i_{j+1})=(r,s), j=1,\dots,n-1 \}$.
Condition \eqref{PE} is a another way to state that the law of the process $Y_{[1:\infty]}$ is summarized by the statistic $(t_n)_n$, see Freedman (1962). If in addition to \eqref{PE} recurrence holds for the process $Y_{[1:\infty]}$, i.e. $\bP\left(\limsup\limits_{n\rightarrow\infty} \{Y_n=Y_1\}\right)=1$, then results of \citet{diaconis1980finetti} show that there is a unique measure $\mu\in\cP(\cP(\N_0)\times \kS)$ such that for all $n\in\N$
\begin{align*}
\bP(Y_i=y_i; i=1,\dots,n)=\int\limits_{\cP(\N_0) \times \kS} \nu_{y_1} \prod\limits_{i=1}^{n-1} p_{y_i,y_{i+1}} \mu(d\nu,dP).
\end{align*}
Thus, any law of a recurrent process fulfilling \eqref{PE} is a mixture of Markov measures. Note that recurrence is necessary to exclude pathologies from the mixture, c.f. \citet[Example (19)]{diaconis1980finetti}. However, an earlier result appeared in \citet{freedman1962invariants} for measures being shift-invariant, a property which clearly implies recurrence. Therefor, he showed an even more general result for stationary probabilities being summarized by a so called $S$-structure statistic and noted that the family $(t_n)_n$ has $S$-structure. Since the $M/G/1$ system is assumed to be in equilibrium, we feel that this approach is appropriate for the embedded Markov chain of the underlying queueing system. Roughly, this implies for above de Finetti-style theorem for Markov chains that it suffices to regard the mixing measure $\mu$ having support $\kS$ since the corresponding invariant distribution $\nu$ is uniquely determined by the stochastic matrix $P$ $[$see e.g. \citet{chung1967markov} or \citet{freedman1983markov}$]$ .\\

In analogy to the previous subsection, we examine under what constraints the support of $\mu\in\cP(\kS)$ can be shrunk to the set of $\Delta_{1,1}$ matrices governing the embedded Markov chains of $M/G/1$ systems. Of course, $\mathcal{L}(N)$ is summarized by the family of mappings $(t_n)$ since it is Markov. Yet, $(t_n)$ is not minimal sufficient, a fact that yields a different grouping of data strings of equal length in equivalence classes, see examples below. Clearly, there are two properties of $M$ being not fulfilled for arbitrary infinite stochastic matrices. Namely $\Delta$-shape and homogeneity. They should be reflected in the appropriate statistic summarizing a mixture of laws of embedded Markov chains of $M/G/1$. For any positive integer $n$, let $\D_n:=\left\{(a_i)\in\N_0^{n}: a_{i}-a_{i+1}\leq 1 ,\forall 1\leq i \leq n-1\right\}$ be the $n$-dimensional down-skip-free subspace of $\N_0^n$, $\D_{\infty}\subset \N_0^{\N}$ the down-skip-free sequence space and $\D:=\bigcup_{n\in\N}\D_n$ the collection of down-skip-free strings of any length. Moreover, let $\tau:=(\tau_n)_{n\in\N}$ be a family of measurable mappings, each  $\tau_n$ operating on $\D_n$ through
\begin{align*}
\tau_n : &\D_n \rightarrow \N_0\times \{0,1,\dots,n\}\times \N_0^{\N}\\
&a_{[1:n]}\mapsto \left(a_1, \sum\limits_{k=1}^n \delta_{a_k 0}, I(a_{[1:n]})\right),
\end{align*}
where $I(a_{[1:n]}):=\left(\iota_r(a_{[1:n]})\right)_{r\in\N_0}:=\left(\#\left\{j:a_{j+1}-a_j+(1-\delta_{a_j 0})=r, j=1,\dots,n\right\}\right)_{r\in\N_0}$ are the zero-adjusted increments of the data string $a_{[1:n]}$.
Thus, $\tau_n$ records the length, the initial state, the number of zeros and the increments of the down-skip-free data string $a_{[1:n]}$.
Call two strings $a_{[1:n]},b_{[1:n]}\in\D_n$ $\tau$-equivalent and write $a_{[1:n]} \sim_{\tau} b_{[1:n]}$ if and only if $\tau_n[a_{[1:n]}]=\tau_n[b_{[1:n]}]$.\\

Following examples are given to clarify the meaning of the statistic $\tau$.

\begin{example}
Define the following elements of $\D_9$ by 
\begin{align*}
&a_{[1:9]}=121211100, \qquad b_{[1:9]}=102232101, \qquad c_{[1:9]}=123332100\\
&d_{[1:9]}=100234543, \qquad e_{[1:9]}=121002343, \qquad f_{[1:9]}=102102323\\
&g_{[1:9]}=100234543, \qquad h_{[1:9]}=100002345, \qquad i_{[1:9]}=210101100
\end{align*}
For instance, $g_{[1:9]}$ can be viewed as
\begin{align*}
\begin{tikzpicture}[node distance=2em]
	\node[state, circle, inner sep=0pt, minimum size=0.7cm] (S1) {$1$};
	\node[state, right=of S1, circle, inner sep=0pt, minimum size=0.7cm] (S2) {$0$};
	\node[state, right=of S2, circle, inner sep=0pt, minimum size=0.7cm] (S3) {$0$};
	\node[state, right=of S3, circle, inner sep=0pt, minimum size=0.7cm] (S4) {$2$};
	\node[state, right=of S4, circle, inner sep=0pt, minimum size=0.7cm] (S5) {$3$};
	\node[state, right=of S5, circle, inner sep=0pt, minimum size=0.7cm] (S6) {$4$};
	\node[state, right=of S6, circle, inner sep=0pt, minimum size=0.7cm] (S7) {$5$};
	\node[state, right=of S7, circle, inner sep=0pt, minimum size=0.7cm] (S8) {$4$};
	\node[state, right=of S8, circle, inner sep=0pt, minimum size=0.7cm] (S9) {$3$};
	 \path[->]
    (S1) edge [bend right=50] node[below] {$+0$} (S2)
		(S2) edge [bend right=50] node[below] {$+0$} (S3)
		(S3) edge [bend right=50] node[below] {$+2$} (S4)
		(S4) edge [bend right=50] node[below] {$+2$} (S5)
		(S5) edge [bend right=50] node[below] {$+2$} (S6)
		(S6) edge [bend right=50] node[below] {$+2$} (S7)
		(S7) edge [bend right=50] node[below] {$+0$} (S8)
		(S8) edge [bend right=50] node[below] {$+0$} (S9);
\end{tikzpicture}.
\end{align*}
This corresponds to the possible path (depending on the times $T_{[1:n]}$) given trough
\begin{align*}
\begin{tikzpicture}[thick]
	\draw (0,0)--(13,0);
	\draw [->,decorate, decoration=snake] (1.1,1) -- (1.1,0) ;
	\draw [->,decorate, decoration=snake] (2.1,1) -- (2.1,0);
	\draw [->,decorate, decoration=snake] (2.8,1) -- (2.8,0);
	\draw [->,decorate, decoration=snake] (3.7,1) -- (3.7,0);
	\draw [->,decorate, decoration=snake] (4.4,1) -- (4.4,0) ;
	\draw [->,decorate, decoration=snake] (5.3,1) -- (5.3,0) ;
	\draw [->,decorate, decoration=snake] (6,1) -- (6,0) ;
	\draw [->,decorate, decoration=snake] (7.2,1) -- (7.2,0);
	\draw [->,decorate, decoration=snake] (8,1) -- (8,0) ;
	\draw [->,decorate, decoration=snake] (9.5,1) -- (9.5,0);
	\draw [->,decorate, decoration=zigzag] (0,0) -- (0,-1) node [below]{$X_1$};
	\draw [->,decorate, decoration=zigzag] (0.6,0) -- (0.6,-1) node [below]{$X_2$};
	\draw [->,decorate, decoration=zigzag] (1.5,0) -- (1.5,-1) node [below]{$X_3$};
	\draw [->,decorate, decoration=zigzag] (4,0) -- (4,-1) node [below]{$X_4$};
	\draw [->,decorate, decoration=zigzag] (5.5,0) -- (5.5,-1) node [below]{$X_5$};
	\draw [->,decorate, decoration=zigzag] (7.5,0) -- (7.5,-1) node [below]{$X_6$};
	\draw [->,decorate, decoration=zigzag] (10,0) -- (10,-1) node [below]{$X_7$};
	\draw [->,decorate, decoration=zigzag] (10.5,0) -- (10.5,-1) node [below]{$X_8$};
	\draw [->,decorate, decoration=zigzag] (12,0) -- (12,-1) node [below]{$X_9$};
\end{tikzpicture},
\end{align*}
where a snake arrow depicts an arrival, a zigzag arrow a departure and $X_i=(T_i,N(T_i))$. Note that the system is idle after the second and the third departure, respectively, and occupied apart from that.\\

We contrast above by depicting $i_{[1:9]}$ the same way.
\begin{align*}
\begin{tikzpicture}[node distance=2em]
	\node[state, circle, inner sep=0pt, minimum size=0.7cm] (S1) {$2$};
	\node[state, right=of S1, circle, inner sep=0pt, minimum size=0.7cm] (S2) {$1$};
	\node[state, right=of S2, circle, inner sep=0pt, minimum size=0.7cm] (S3) {$0$};
	\node[state, right=of S3, circle, inner sep=0pt, minimum size=0.7cm] (S4) {$1$};
	\node[state, right=of S4, circle, inner sep=0pt, minimum size=0.7cm] (S5) {$0$};
	\node[state, right=of S5, circle, inner sep=0pt, minimum size=0.7cm] (S6) {$1$};
	\node[state, right=of S6, circle, inner sep=0pt, minimum size=0.7cm] (S7) {$1$};
	\node[state, right=of S7, circle, inner sep=0pt, minimum size=0.7cm] (S8) {$0$};
	\node[state, right=of S8, circle, inner sep=0pt, minimum size=0.7cm] (S9) {$0$};
	 \path[->]
    (S1) edge [bend right=50] node[below] {$+0$} (S2)
		(S2) edge [bend right=50] node[below] {$+0$} (S3)
		(S3) edge [bend right=50] node[below] {$+1$} (S4)
		(S4) edge [bend right=50] node[below] {$+0$} (S5)
		(S5) edge [bend right=50] node[below] {$+1$} (S6)
		(S6) edge [bend right=50] node[below] {$+1$} (S7)
		(S7) edge [bend right=50] node[below] {$+0$} (S8)
		(S8) edge [bend right=50] node[below] {$+0$} (S9);
\end{tikzpicture}.
\end{align*}
\begin{align*}
\begin{tikzpicture}[thick]
	\draw (0,0)--(13,0);
	\draw [->,decorate, decoration=snake] (3,1) -- (3,0) ;
	\draw [->,decorate, decoration=snake] (5.6,1) -- (5.6,0);
	\draw [->,decorate, decoration=snake] (7.7,1) -- (7.7,0);
	\draw [->,decorate, decoration=snake] (8.3,1) -- (8.3,0);
	\draw [->,decorate, decoration=snake] (10,1) -- (10,0) ;
	\draw [->,decorate, decoration=snake] (12,1) -- (12,0) ;
	\draw [->,decorate, decoration=zigzag] (0,0) -- (0,-1) node [below]{$Y_1$};
	\draw [->,decorate, decoration=zigzag] (1.3,0) -- (1.3,-1) node [below]{$Y_2$};
	\draw [->,decorate, decoration=zigzag] (2.5,0) -- (2.5,-1) node [below]{$Y_3$};
	\draw [->,decorate, decoration=zigzag] (6.5,0) -- (6.5,-1) node [below]{$Y_4$};
	\draw [->,decorate, decoration=zigzag] (7.5,0) -- (7.5,-1) node [below]{$Y_5$};
	\draw [->,decorate, decoration=zigzag] (9.5,0) -- (9.5,-1) node [below]{$Y_6$};
	\draw [->,decorate, decoration=zigzag] (10.8,0) -- (10.8,-1) node [below]{$Y_7$};
	\draw [->,decorate, decoration=zigzag] (11.5,0) -- (11.5,-1) node [below]{$Y_8$};
	\draw [->,decorate, decoration=zigzag] (13,0) -- (13,-1) node [below]{$Y_9$};
\end{tikzpicture}.
\end{align*}
We contrast $g_{[1:9]}$ and $i_{[1:9]}$ due to the following reasons. Having a look at the path of $g_{[1:9]}$ and $i_{[1:9]}$, respectively, one observes that the $i$-path has more idle times as the $g$-path. Roughly speaking, that is because the $g$-path is governed by more increments of higher magnitude. Thus, the probability that the system is unoccupied is lower. This responds to the fact that the arrival and the departure stream of $g$ do not look like having the same intensity rate, while the streams of $i$ rather do. However, this is a basic assumption we use, since the system is assumed to have run an infinitely long time. So, if $g_{[1:9]}$ would be a ``typical'' path, the embedded Markov chain is rather likely to be transient. Above examples are chosen to clarify the meaning of the statistic $\tau$ by means of as distinct numbers as possible. Further, keep in mind that only the stream below the horizontal line is communicated to us as data. 
Notice that the only equivalences among above data examples are
\begin{align*}
a_{[1:9]}\sim_{\tau}b_{[1:9]}\sim_{\tau}c_{[1:9]} \textit{ and }
d_{[1:9]}\sim_{\tau}e_{[1:9]}\sim_{\tau}f_{[1:9]}\sim_{\tau}g_{[1:9]}.
\end{align*}
\end{example}

\begin{remark}
\begin{enumerate}[(i)]
	\item In a sense, the statistic $\tau$ can be seen to lie in between of $t$ and $o$, $o$ denoting order statistic,  but closer to $t$ in the following sense. $t$-equivalence clearly implies $\tau$-equivalence, since one can recover the number of appearance of states among data, as well as the increments $\iota$. The converse is not true nor does $o$-equivalence imply $\tau$-invariance or the other way around, respectively. Of course, $t$-equivalence implies $o$-equivalence, see e.g. \citet[Proposition (27)]{diaconis1980finetti}.
	\item For a data string of finite length, it is obvious that the number of elements of the corresponding equivalence class is bounded. However, eliciting the exact number of elements included in this class seems to be an interesting but hard task, as it is in the case of $t$. We leave this as an open combinatorial problem.
\end{enumerate}
\end{remark}

We continue by mentioning an observation concerning the number of arrivals and departures that occur within a time horizon of observation $T_n-T_1$. By definition, it is clear that one observes $n-1$ departures. However, even if the statistic $\tau$ keeps track of the number of increments of the process, it may happen that the number of arrivals differ in $\tau$-equivalent strings. For instance note that in above example the number of arrivals in $b_{[1:9]}$ exceeds that of $a_{[1:9]}$ by one despite $a_{[1:9]} \sim_{\tau} b_{[1:9]}$.
We pin that fact as a proposition.

\begin{proposition}
The number of departing customers during $T_n-T_1$ is an invariant of $\tau$, while the number of arriving customers is not.
\end{proposition}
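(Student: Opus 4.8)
The plan is to dispatch the two assertions separately. The claim about departures is a counting triviality, while the claim about arrivals I would reduce to the conservation of customers in a single server, combined with the observation that $\tau_n$ retains the \emph{initial} state of the data string but not the \emph{terminal} one.

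\textbf{Departures.} The string $a_{[1:n]}$ records the system sizes $N(T_1),\dots,N(T_n)$ at the \emph{consecutive} departure epochs $T_1<\dots<T_n$ of the served customers, so the departures falling into the observation window of length $T_n-T_1$ are exactly those at $T_2,\dots,T_n$, i.e.\ there are $n-1$ of them, regardless of the path. Since $\tau$-equivalence is declared only between strings of the same length $n$, the map $a_{[1:n]}\mapsto n-1$ is constant on $\tau$-equivalence classes, which settles the first part.

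\textbf{Arrivals.} No customer is ever lost in an $M/G/1$ queue, so customers are conserved: writing $A$ for the number of arrivals occurring in the window of length $T_n-T_1$ and using that the number of departures there is $n-1$, one gets $N(T_n)=N(T_1)+A-(n-1)$, hence
\begin{align*}
A \;=\; a_n - a_1 + (n-1).
\end{align*}
(Equivalently, telescoping the zero-adjusted increments gives $a_n-a_1=\sum_{r\geq 0}r\,\iota_r(a_{[1:n]})-(n-1)+\sum_{k=1}^{n}\delta_{a_k 0}-\delta_{a_n 0}$, so $A=\sum_{r\geq 0}r\,\iota_r(a_{[1:n]})+\sum_{k=1}^{n}\delta_{a_k 0}-\delta_{a_n 0}$, which isolates the one summand — the indicator $\delta_{a_n 0}$ of the terminal state being empty — that $\tau_n$ fails to see.) In this formula $n-1$ and $a_1$ are $\tau$-invariant, the latter because $\tau_n(a_{[1:n]})=\bigl(a_1,\sum_k\delta_{a_k 0},I(a_{[1:n]})\bigr)$ stores $a_1$; but the terminal value $a_n$ is not stored and cannot in general be reconstructed from $\tau_n$, which is unable to distinguish a string ending in state $0$ from one ending in state $1$. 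To witness the failure of invariance it then suffices to invoke the two members of the example with $a_{[1:9]}=121211100\sim_{\tau}b_{[1:9]}=102232101$: here $a_9=0$ while $b_9=1$, so the respective arrival counts are $0-1+8=7$ and $1-1+8=8$, which differ, completing the proof.

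There is no real obstacle here; the only points needing a little care are fixing the half-open convention for the observation window so that conservation holds in the exact form $N(T_n)=N(T_1)+A-(n-1)$, and emphasizing that non-invariance genuinely requires exhibiting a $\tau$-equivalent pair (or, equivalently, the remark that $\tau_n$ conflates terminal states $0$ and $1$) rather than merely noting that $a_n$ does not appear verbatim in $\tau_n(a_{[1:n]})$.
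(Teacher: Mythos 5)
Your proof is correct and takes essentially the same route as the paper: the departure count is $n-1$ by the definition of the observation scheme (and $\tau$-equivalence preserves string length), while non-invariance of the arrival count is witnessed by the very pair $a_{[1:9]}\sim_{\tau}b_{[1:9]}$ that the paper itself points to. Your conservation identity $A=a_n-a_1+(n-1)$ and its rewriting that isolates $\delta_{a_n 0}$ as the sole non-invariant summand is a welcome sharpening the paper leaves implicit, and it is consistent with the paper's subsequent lemma showing that $\tau$ determines the terminal state except for the $\{0,1\}$ ambiguity.
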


Now, we go on to shed more light on the aforementioned fact that the numbers of arrivals in equivalent strings can differ. This fact corresponds in a sense to the one that equivalent strings may not end with the same symbol. However, in the case of mixtures of Markov chains this is not true, i.e. the terminal state $x_n$ of a data string $x_{[1:n]}$ is completely determined by $x_1$ and the transition-counts. For a proof see e.g. \citet[Lemma 6.1.1.]{martin1967bayesian} and notice that it continues to hold true for countable infinite state spaces.

\begin{lemma}\thlabel{tau}
Let for $a_{[1:n]},b_{[1:n]}\in\D_n$ hold $a_{[1:n]}\sim_{\tau} b_{[1:n]}$. Then 
\begin{enumerate}[(i)]
  \item $a_n\in\{0,1\}$ $\Leftrightarrow$ $b_n\in\{0,1\}$,
	\item $a_n=r>1$ $\Leftrightarrow$ $b_n=r$.
\end{enumerate}
\end{lemma}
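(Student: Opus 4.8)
The plan is to distill from $\tau_n(a_{[1:n]})$ a single integer that pins down the terminal symbol up to exactly the ambiguity asserted in the lemma, by telescoping the zero-adjusted increments. Introduce the two quantities
\begin{align*}
Z(a_{[1:n]}):=\sum_{k=1}^n\delta_{a_k0},\qquad \Sigma(a_{[1:n]}):=\sum_{r\in\N_0}r\,\iota_r(a_{[1:n]}),
\end{align*}
the first being the zero count (the second coordinate of $\tau_n$) and the second being the total of all zero-adjusted increments along the string (a finite sum determined by $I(a_{[1:n]})$, the third coordinate of $\tau_n$); the length $n$ is common to $a_{[1:n]}$ and $b_{[1:n]}$ since $\sim_\tau$ is defined on $\D_n$. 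First I would expand the definition of $\Sigma$ and telescope:
\begin{align*}
\Sigma(a_{[1:n]})=\sum_{j=1}^{n-1}\bigl[a_{j+1}-a_j+(1-\delta_{a_j0})\bigr]=(a_n-a_1)+(n-1)-\sum_{j=1}^{n-1}\delta_{a_j0}.
\end{align*}
Since the inner sum misses exactly the terminal symbol, $\sum_{j=1}^{n-1}\delta_{a_j0}=Z(a_{[1:n]})-\delta_{a_n0}$, and rearranging yields the key identity
\begin{align*}
a_n+\delta_{a_n0}=\Sigma(a_{[1:n]})+Z(a_{[1:n]})+a_1-(n-1).
\end{align*}

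The right-hand side is a function of $\tau_n(a_{[1:n]})$ alone, so $a_{[1:n]}\sim_\tau b_{[1:n]}$ forces $a_n+\delta_{a_n0}=b_n+\delta_{b_n0}=:v$. It then remains to analyse the map $\phi:\N_0\to\N_0$, $\phi(m)=m+\delta_{m0}$, which satisfies $\phi(0)=\phi(1)=1$ and $\phi(m)=m$ for $m\ge2$; hence $v\ge1$ always, $\phi^{-1}(1)=\{0,1\}$, and $\phi^{-1}(r)=\{r\}$ for $r>1$. Reading off the fibre of $\phi$ over $v$ gives both statements simultaneously: $v=1\iff a_n\in\{0,1\}\iff b_n\in\{0,1\}$, which is (i); and $v=r>1\iff a_n=r\iff b_n=r$, which is (ii).

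I expect the only point requiring care to be the endpoint correction $\sum_{j=1}^{n-1}\delta_{a_j0}=Z-\delta_{a_n0}$ in the telescoping step: it is precisely because the terminal symbol is not among the $a_j$ feeding into the increments that one recovers $a_n+\delta_{a_n0}$ rather than $a_n$ itself, and this single extra term is the source of the collapse of $\{0,1\}$ — consistent with the remark before the lemma that, had we instead recorded the transition \emph{out of} $a_n$ (as a genuine transition-count statistic does), the terminal state would be determined exactly. I would also check the small cases $n=1,2$ and confirm that the index range in force is one increment per transition; after that the proof is the displayed algebraic identity together with the elementary description of $\phi^{-1}$. Note that down-skip-freeness is used only to guarantee the $\iota_r$ are well defined, not in the identity itself.
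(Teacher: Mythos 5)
Your proof is correct and is essentially the paper's argument: both rest on telescoping the zero-adjusted increments $\sum_{j=1}^{n-1}\bigl[a_{j+1}-a_j+(1-\delta_{a_j0})\bigr]$ and correcting the zero count at the endpoint, which is exactly the paper's identities $(\ast)$ and $(\ast\ast)$. The only difference is presentational — you package the outcome as the single $\tau$-determined invariant $a_n+\delta_{a_n0}$ and read both (i) and (ii) off the fibres of $\phi(m)=m+\delta_{m0}$, whereas the paper argues by contradiction with a case split on $a_n\in\{0,1\}$; your version is a touch cleaner but not a different route.
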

\begin{proof}
(i) By contradiction.\\
 Suppose $a_n\in\{0,1\}$ and $b_n\geq 2$. Since $a_{[1:n]}\sim_{\tau}b_{[1:n]}$, one necessarily has
\begin{align*}
\sum\limits_{r\in\N_0}\iota_r(a_{[1:n]})=\sum\limits_{r\in\N_0}\iota_r(b_{[1:n]}).
\end{align*}
Thus,
\begin{align*}
\sum\limits_{k=1}^{n-1}\left[ a_{k+1}-a_k+(1-\delta_{a_k0})\right]=\sum\limits_{k=1}^{n-1}\left[ b_{k+1}-b_k+(1-\delta_{b_k0})\right], \tag{$\ast$}
\end{align*}
which in turn yields
\begin{align}
a_n+\sum\limits_{k=1}^{n-1}\delta_{a_k 0}=b_n+\sum\limits_{k=1}^{n}\delta_{b_k 0}, \tag{$\ast \ast$}
\end{align}
since $a_1=b_1$ and $b_n\neq 0$ by assumption. \\
Now, treat the two possible cases $a_n\in\{0,1\}$ separately.\\

Case 1:($a_n=0$) By ($\ast \ast$), one has
\begin{align*}
&0-b_n=\sum\limits_{k=1}^n \delta_{b_k 0} -\sum\limits_{k=1}^n \delta_{a_k 0} +\underbrace{\delta_{a_n 0}}_{=1}\\
&\Rightarrow b_n=-1.
\end{align*}
Case 2:($a_n=1$) Again by ($\ast \ast$), one has
\begin{align*}
&1-b_n=\sum\limits_{k=1}^{n}\delta_{b_k 0} - \sum\limits_{k=1}^{n}\delta_{a_k 0}+\underbrace{\delta_{a_n 0}}_{=0}\\
&\Rightarrow b_n=1.
\end{align*}
This shows necessity. By interchanging the roles of $a_{[1:n]}$ and $b_{[1:n]}$, sufficiency is proven the same way. Thus, (i) follows.\\

(ii) To prove (ii), note that 
\begin{align*}
\sum\limits_{k=1}^{n-1}\delta_{a_k 0}=\sum\limits_{k=1}^{n}\delta_{a_k 0}=\sum\limits_{k=1}^{n}\delta_{b_k 0}=\sum\limits_{k=1}^{n-1}\delta_{b_k 0},
\end{align*}
exploiting (i) and $a_{[1:n]}\sim_{\tau}b_{[1:n]}$. 
Thus, ($\ast$) yields
\begin{align*}
a_n-a_1=b_n-b_1.
\end{align*}
\end{proof}

The lemma states that equality of the terminal state must only hold if the terminal state exceeds $1$. This reflects the fact that the laws over which one mixes are governed by $\Delta_{1,1}$-matrices. 
As a consequence of the \thref{tau} we have that the statistic $\tau$ possesses a certain kind of algebraic structure which reflects stability of the equivalence classes induced by the statistic with respect to extension of the data. This structure was discovered in \citet{freedman1962invariants} who called it $S$-structure. Here we consider its analog on the space $\D$. That is a statistic $\sigma=\left\{\sigma_n\right\}_{n\in\N}$ is said to have $S$-structure on $\D$ if for $a_{[1:n]},b_{[1:n]}\in\D_n$ and $x_{[1:m]},y_{[1:m]}\in\D_m$ such that $a_{[1:n]}\sim_{\tau}b_{[1:n]}$, $x_{[1:m]}\sim_{\tau}y_{[1:m]}$ and $a_{[1:n]}x_{[1:m]},b_{[1:n]}y_{[1:m]}\in\D_{n+m}$ it holds true that $a_{[1:n]}x_{[1:m]}\sim_{\tau}b_{[1:n]}y_{[1:m]}$, $n,m\in \N$. $S$-structure will then enable us to identify stationary measures summarized by $\tau$ in a unique way as mixtures of laws of embedded Markov chains of $M/G/1$.

\begin{proposition}
$\tau$ has $S$-structure on $\D$.
\end{proposition}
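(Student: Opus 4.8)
The plan is to verify the three coordinates of $\tau_{n+m}$ directly, by expressing $\tau_{n+m}$ of a concatenation in terms of the $\tau$-data of its two halves together with a single ``junction'' quantity, and then disposing of that junction quantity by appealing to \thref{tau}. Write $c_{[1:n+m]}:=a_{[1:n]}x_{[1:m]}$ and $d_{[1:n+m]}:=b_{[1:n]}y_{[1:m]}$; both lie in $\D_{n+m}$ by hypothesis, so $\tau_{n+m}$ is defined on them, and it suffices to show $\tau_{n+m}(c_{[1:n+m]})=\tau_{n+m}(d_{[1:n+m]})$.

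The first two coordinates I would dispatch immediately. The initial symbol of $c_{[1:n+m]}$ is $a_1$, and $a_1=b_1$ since $a_{[1:n]}\sim_{\tau}b_{[1:n]}$. The zero-count is additive along a concatenation, $\sum_{k=1}^{n+m}\delta_{c_k0}=\sum_{k=1}^{n}\delta_{a_k0}+\sum_{k=1}^{m}\delta_{x_k0}$, and each summand equals the corresponding one for $b_{[1:n]}$, resp. $y_{[1:m]}$, so the zero-counts of $c_{[1:n+m]}$ and $d_{[1:n+m]}$ agree. The content lies in the third coordinate, the zero-adjusted increment vector $I$. Here the key step is the decomposition: the increments of $c_{[1:n+m]}$ are precisely the $n-1$ increments internal to $a_{[1:n]}$, followed by the single junction increment $j(a_n,x_1):=x_1-a_n+(1-\delta_{a_n0})$, followed by the $m-1$ increments internal to $x_{[1:m]}$; moreover $j(a_n,x_1)$ is a genuine nonnegative integer exactly because $c_{[1:n+m]}\in\D_{n+m}$ forces $a_n-x_1\le 1$. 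Writing $e_r$ for the sequence equal to $1$ in coordinate $r$ and $0$ elsewhere, this gives $I(c_{[1:n+m]})=I(a_{[1:n]})+e_{j(a_n,x_1)}+I(x_{[1:m]})$, and analogously $I(d_{[1:n+m]})=I(b_{[1:n]})+e_{j(b_n,y_1)}+I(y_{[1:m]})$.

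Since $I(a_{[1:n]})=I(b_{[1:n]})$ and $I(x_{[1:m]})=I(y_{[1:m]})$ by hypothesis, the entire claim reduces to the single equality $j(a_n,x_1)=j(b_n,y_1)$. As $x_1=y_1$, this is equivalent to $a_n-(1-\delta_{a_n0})=b_n-(1-\delta_{b_n0})$, i.e.\ to $\max(a_n-1,0)=\max(b_n-1,0)$. This is exactly where \thref{tau} enters: by that lemma, either $a_n,b_n\in\{0,1\}$, in which case $\max(a_n-1,0)=0=\max(b_n-1,0)$; or $a_n=b_n=r>1$, in which case $\max(a_n-1,0)=r-1=\max(b_n-1,0)$. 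In both cases the junction increments coincide, so $I(c_{[1:n+m]})=I(d_{[1:n+m]})$, and together with the first two coordinates this yields $\tau_{n+m}(c_{[1:n+m]})=\tau_{n+m}(d_{[1:n+m]})$, i.e.\ $a_{[1:n]}x_{[1:m]}\sim_{\tau}b_{[1:n]}y_{[1:m]}$, as required.

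I do not expect a serious obstacle: the argument is essentially bookkeeping on top of \thref{tau}. The one point worth emphasizing is that the junction increment depends on the terminal symbol $a_n$ only through its ``effective'' value $\max(a_n-1,0)$, and that this effective terminal state — unlike $a_n$ itself — is a $\tau$-invariant; this is precisely the delicate half of \thref{tau}, and it reflects that we mix over $\Delta_{1,1}$-matrices, so that states $0$ and $1$ act identically as predecessors. Without it, $S$-structure would genuinely fail, consistent with the earlier observation that $\tau$-equivalent strings need not terminate in the same symbol.
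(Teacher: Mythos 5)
Your proof is correct and follows essentially the same route as the paper's: reduce everything to the single junction increment at the concatenation point and invoke the terminal-state lemma together with the observation that predecessors $0$ and $1$ contribute identically to $\tau$. Your version merely makes explicit the bookkeeping (additivity of the zero-count and the decomposition $I(a_{[1:n]}x_{[1:m]})=I(a_{[1:n]})+e_{j(a_n,x_1)}+I(x_{[1:m]})$) that the paper's terser argument leaves implicit.
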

\begin{proof}
By above \thref{tau}, nothing has to be shown if $a_n>1$. However, if $a_n\in\{0,1\}$ and $a_n\neq b_n$ then, again by the lemma, one has $b_n=1-a_n$. Since the stacked data strings of length $n+m$ are down-skip-free by assumption, one has $a_{[1:n]}x_{[1:m]}\sim_{\tau}b_{[1:n]}y_{[1:m]}$. Indeed, observing an increment from $0$ to $r\in\N_0$ provides the same $\tau$-information as from $1$ to $r$ by definition of $\tau$.
\end{proof}

Notice that without recording the number of zeros among the data, one would not have $S$-structure. That is $\tilde{\tau}[a_{[1:n]}]:=(a_1,I(a_{[1:n]}))$ does not possess S-structure. For the sake of illustration take $g_{[1:9]}$ and $h_{[1:9]}$ from previous example and note that both have $1$ as initial state and four increments of magnitude $0$ and $2$, respectively. Hence, $g_{[1:9]}\sim_{\tilde{\tau}}h_{[1:9]}$ but $g_{[1:9]}44$ is not $\tilde{\tau}$-equivalent to $h_{[1:9]}44$. Roughly speaking, seeing how often the system is idle is informative for the $M/G/1$ system.\\

The following lemma states an invariance property of laws summarized by $\tau$ with respect to scaling the magnitude of the coordinate process. It is necessary for the proof of the subsequent theorem since it will establish the homogeneity of the stochastic matrix.

\begin{lemma}
Let $P\in\cP(\D)$ be summarized by $\tau$. Furthermore, let $x_{[1:n]}\in\D_n$ with $x_i>0$, $\forall i=1,\dots,n$ and for $r\geq 1$ let $y_{[1:n]}=x_{[1:n]}+r_{[1:n]}$, where $r_{[1:n]}=(r,r,\dots,r)$. Then it holds true that$P(x_{[1:n]}|x_1)=P(y_{[1:n]}|y_1)$.
\end{lemma}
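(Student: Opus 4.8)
The plan is to recognize that $x_{[1:n]}$ and $y_{[1:n]}$ carry the same $\tau$-information apart from their first coordinate, and then to compare conditional laws by embedding both into genuinely $\tau$-equivalent strings. Recall that ``$P$ summarized by $\tau$'' means, as in \eqref{PE}, that $\cL[Y_{[1:m]}\mid\tau_m(Y_{[1:m]})=v]=\U_{\tau_m^{-1}(v)}$ for all $m,v$, so $P(a_{[1:m]})=P(b_{[1:m]})$ whenever $a_{[1:m]}\sim_\tau b_{[1:m]}$; moreover, since the $M/G/1$ system is in equilibrium (equivalently, $P$ is stationary in the sense of Freedman's $S$-structure theory), all finite-dimensional marginals of $P$ are shift-invariant. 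Because $x_i>0$ and $y_i=x_i+r>0$ for every $i$, no Kronecker term $\delta_{x_i0},\delta_{y_i0}$ is active, so the zero-adjusted increments agree, $\iota_j(x_{[1:n]})=x_{j+1}-x_j+1=y_{j+1}-y_j+1=\iota_j(y_{[1:n]})$; hence $I(x_{[1:n]})=I(y_{[1:n]})$, neither string visits $0$, and $x_{[1:n]},y_{[1:n]}$ differ in their $\tau$-value only through the initial coordinate $x_1$ versus $y_1=x_1+r$. I would argue for general $r\ge1$ at once.

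For an integer $k\ge r$ I would pad both strings down to a common initial state $x_1+k$ by prepending (and, for $Y$, also appending) descending ramps:
\begin{align*}
X&:=(x_1+k,\,x_1+k-1,\,\dots,\,x_1+1,\,x_1,x_2,\dots,x_n),\\
Y&:=(x_1+k,\,\dots,\,y_1+1,\,y_1,y_2,\dots,y_n,\,y_n-1,\dots,y_n-r).
\end{align*}
Both lie in $\D_{k+n}$, since every descent is by one unit and, using $y_n-r=x_n>0$, all coordinates stay positive; both have length $k+n$, initial state $x_1+k$, no zero at all, and, because a unit descent contributes the increment $0$ while by the first paragraph the remaining increments coincide, the same increment multiset, namely $k$ copies of $0$ together with $I(x_{[1:n]})$. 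Therefore $X\sim_\tau Y$, whence $P(X)=P(Y)$. Consistently with \thref{tau}, $X$ and $Y$ share the terminal value $x_n$, which is precisely why the appended descent on the $Y$-side was given length $r$, undoing the shift by $r$ at the end.

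It remains to convert the single identity $P(X)=P(Y)$ into $P(x_{[1:n]}\mid x_1)=P(y_{[1:n]}\mid y_1)$, and I expect this to be the substantive part. The idea is to sum the identity over all admissible connectors: replace in $X$ the straight descending prefix by an arbitrary down-skip-free path of length $k$ whose last entry is $\le x_1+1$, adjust the $Y$-side correspondingly (a shorter prefix together with the fixed appended descent of length $r$), and use $I(x_{[1:n]})=I(y_{[1:n]})$ and the $S$-structure of $\tau$ (the preceding Proposition) to see that the two padded strings stay $\tau$-equivalent, hence equiprobable, summand by summand. Stationarity then collapses the sums: $\sum_c P\big((c_{[1:k]},x_{[1:n]})\big)=P(Y_{[1:n]}=x_{[1:n]})$ and $\sum_c P\big((c_{[1:k]},x_1)\big)=P(Y_1=x_1)$, the appended descents sum out, and likewise on the $y$-side, so that dividing yields the claim. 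The main obstacle is exactly this reduction: since $P$ is not assumed Markov, it must be organised as a carefully length-matched summation over connectors, and the terminal-state restriction of \thref{tau}, which forbids pairing a string ending at $1$ with one ending at a state $\ge2$, is what forces the connectors to be attached at the common high state $x_1+k$ rather than at the low ends; exhibiting the exact correspondence between admissible connectors on the two sides is the crux.
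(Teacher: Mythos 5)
Your first two steps are sound: the observation that on zero-free strings the zero-adjusted increments are ordinary increments plus one, so that $I(x_{[1:n]})=I(y_{[1:n]})$, and the padding construction producing $\tau$-equivalent strings $X,Y\in\D_{k+n}$ with $P(X)=P(Y)$, are both correct. But the step you yourself flag as ``the crux'' --- passing from this single unconditional identity to $P(x_{[1:n]}\mid x_1)=P(y_{[1:n]}\mid y_1)$ --- is a genuine gap, and the sketch you give for it does not go through. Concretely: (a) the appended descent on the $Y$-side is a \emph{fixed} suffix, so summing over prefixes $c$ yields $\sum_c P\bigl(c,y_{[1:n]},y_n-1,\dots,y_n-r\bigr)=P\bigl(Y_{[k-r+1:k-r+n]}=y_{[1:n]},\,Y_{[k-r+n+1:k+n]}=(y_n-1,\dots,y_n-r)\bigr)$, which is not $P(y_{[1:n]})$; the descent does not ``sum out'' because it is never summed over. (b) Even before that, you need a bijection between admissible connectors on the two sides under which the padded strings remain $\tau$-equivalent; since connectors are arbitrary down-skip-free paths that may visit $0$ (changing both the zero count and the zero-adjusted increments of the connector), this correspondence is far from the straight ramps you constructed and is not exhibited. (c) Finally, the target identity has denominators $P(Y_1=x_1)$ and $P(Y_1=x_1+r)$, which are genuinely different marginal probabilities under a stationary $P$; nothing in the proposal explains how these are matched against the respective numerators.

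The paper's proof takes a different, more direct route that avoids padding altogether: it factorizes $P(x_{[1:n]}\mid x_1)=P\bigl(x_{[1:n]}\mid\tau_n(x_{[1:n]})\bigr)\cdot P\bigl(I(x_{[1:n]})\mid x_1,\ \textstyle\sum_i\delta_{x_i0}=0\bigr)$ by the chain rule (for a zero-free string $\tau_n(x_{[1:n]})$ is exactly the triple $(x_1,0,I(x_{[1:n]}))$), and then argues that each factor is unchanged when every coordinate is raised by $r$, again because on zero-free strings the increments are shift-invariant. Whatever one thinks of the rigor of that term-by-term matching, it operates entirely at length $n$ and never has to compare probabilities of strings with different initial states via longer equivalent strings, which is precisely where your argument stalls. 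If you want to salvage your approach, the missing ingredient is a length-$n$ statement rather than a padded one: you would need to show directly that both $P(\cdot\mid\tau_n(\cdot))$ and the conditional law of the increment pattern given the initial state are invariant under the shift by $r$ --- which is exactly the paper's decomposition.
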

\begin{proof}
First of all note that $y_{[1:n]}\in\D_n$ and that $\sum\limits_{i=1}^n\delta_{x_i 0}=0 \Leftrightarrow  \sum\limits_{i=1}^n \delta_{y_i 0}=0$. Moreover,
\begin{align*}
&P\left(x_{[1:n]}|x_1\right)=P\left(x_{[1:n]}|x_1,\sum\delta_{x_i 0}=0\right)=P\left(x_{[1:n]}|\tau_n(x_{[1:n]})\right)P\left(I(x_{[1:n]})|x_1, \sum\delta_{x_i 0}=0\right)\\
&=P\left(x_{[1:n]}|x_1,\sum\delta_{x_i 0}=0, (\#\{j:x_{j+1}-x_j=s,j=1,\dots,n\})_{s\in\N_0}\right)P\left(I(x_{[1:n]})|x_1, \sum\delta_{y_i 0}=0\right)\\
&=P\left(y_{[1:n]}|x_1+r,\sum\delta_{y_i 0}=0, (\#\{j:y_{j+1}-y_j=s,j=1,\dots,n\})_{s\in\N_0}\right)P\left(I(y_{[1:n]})|x_1+r, \sum\delta_{y_i 0}=0\right)\\
&=P\left(y_{[1:n]}|\tau_n(y_{[1:n]})\right)P\left(I(y_{[1:n]})|x_1, \sum\delta_{y_i 0}=0\right)=P\left(y_{[1:n]}|y_1\right).
\end{align*}
\end{proof}

We are now in position to state the mixing theorem for $M/G/1$, which will give rise to a prior distribution that is concentrated on the subspace of Markov measures (MM) that are governed by stochastic matrices which are of the shape discussed in the present section. Therefor let $\Delta_{1,1}^{(h)}$ denote the space of homogenous $1,1$ delta matrices. Moreover, let $MM[\Delta_{1,1}^{(h)}]$ be the space of Markov measures governed by those stochastic matrices and equip this sapce with the sigma field induced by weak convergence.

\begin{theorem}\thlabel{mix}
Let $P\in\cP(\D)$ be a shift-invariant probability summarized by $\tau$. Then, $P$ is a convex mixture of Markov measures governed by homogenous $\Delta_{1,1}$ stochastic matrices. That is, there is a unique measure $\mu\in \cP(\Delta_{1,1}^{(h)})$ such that 
\begin{align*}
P(\cdot)=\int\limits_{MM[\Delta_{1,1}^{(h)}]}Q(\cdot) \mu(dQ).
\end{align*}
\end{theorem}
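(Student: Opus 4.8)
The plan is to deduce Theorem~\ref{mix} from the de Finetti-type theorem for recurrent partially exchangeable sequences of \citet{diaconis1980finetti} (equivalently, Freedman's $S$-structure theorem, since shift-invariance is assumed), and then to carve down the support of the resulting mixing measure using the structural lemmas just established. First I would observe that $\tau$-equivalence is \emph{coarser} than $t$-equivalence (noted in the Remark: $t$-equivalence implies $\tau$-equivalence), so a probability $P$ summarized by $\tau$ is in particular summarized by $t$, i.e.\ satisfies condition~\eqref{PE}. Since $P$ is shift-invariant it is recurrent, so the Diaconis--Freedman representation applies: there is a unique $\mu\in\cP(\cP(\N_0)\times\kS)$ with $P(Y_i=y_i;i\le n)=\int \nu_{y_1}\prod_{i=1}^{n-1}p_{y_i,y_{i+1}}\,\mu(d\nu,dP)$. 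Because $P$ is concentrated on $\D_\infty$, the mixing measure must be concentrated on those $(\nu,P)$ for which $P$-almost every path is down-skip-free; this forces $p_{ij}=0$ whenever $i-j>1$, i.e.\ $P$ is a $\Delta_{1,1}$-matrix $\mu$-a.s. Finally, shift-invariance of $P$ passes to the components: $\mu$-a.e.\ $P$ is recurrent (hence positive recurrent on its communicating class), and $\nu$ is then the unique invariant law of $P$, so we may drop $\nu$ and regard $\mu\in\cP(\kS)$ supported on $\Delta_{1,1}$-matrices.

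The remaining, and main, task is to show that $\mu$ is in fact supported on the \emph{homogeneous} $\Delta_{1,1}$-matrices, and this is exactly where the scaling lemma (the last lemma before the theorem) enters. The idea is that homogeneity of a $\Delta_{1,1}$-matrix $P$ is equivalent to the statement $P(x_{[1:n]}\mid x_1)=P(y_{[1:n]}\mid y_1)$ for all strictly positive down-skip-free strings $x_{[1:n]}$ and their upward shifts $y_{[1:n]}=x_{[1:n]}+r_{[1:n]}$ (for such strings no transition touches the boundary state $0$, so the path probability is a product $\prod_j p_{x_j,x_{j+1}} = \prod_j a_{x_{j+1}-x_j+1}$, and invariance under a global shift $r$ is precisely $a_{x_{j+1}-x_j+1}$ not depending on the absolute level). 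The scaling lemma says this identity holds for \emph{every} $P$ summarized by $\tau$. I would argue that summarization by $\tau$ is preserved under de Finetti mixing in the following sense: the conditional laws $P(\,\cdot\mid\tau_n=\text{value})$ being uniform on fibers is an affine property in $P$ that the representation respects, so $\mu$-almost every ergodic component is again a shift-invariant law summarized by $\tau$; applying the scaling lemma to each such component gives homogeneity $\mu$-a.e. Thus $\mu\in\cP(\Delta_{1,1}^{(h)})$, and transporting it along the bijection $P\mapsto$ (Markov measure governed by $P$) onto $MM[\Delta_{1,1}^{(h)}]$ yields the claimed integral representation.

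For uniqueness I would invoke the uniqueness already contained in the Diaconis--Freedman theorem: two distinct $\mu,\mu'\in\cP(\Delta_{1,1}^{(h)})$ inducing the same $P$ would, via the embedding $\Delta_{1,1}^{(h)}\hookrightarrow\kS$, give two distinct mixing measures in the general representation, contradicting its uniqueness; one only needs that $\Delta_{1,1}^{(h)}$ with the weak-convergence $\sigma$-field is a measurable subset of $\kS$ and that the map to Markov measures is a Borel isomorphism onto its image, which follows from standard descriptive-set-theory facts about $\cP$ of Polish spaces (as cited in the paper for $\cP(\R)$).

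The step I expect to be the genuine obstacle is the claim that ``summarized by $\tau$'' disintegrates through the mixture — that $\mu$-a.e.\ ergodic component inherits the $\tau$-sufficiency (and shift-invariance) of $P$. Shift-invariance of the components is classical (the ergodic decomposition of a stationary measure), but transferring the \emph{finer} combinatorial sufficiency statement requires care: one must show that if the fiberwise-uniformity of $\tau$ fails on a positive-$\mu$ set of components, then it already fails for the mixture $P$, which is a statement about exchangeability of the conditional laws that I would prove by testing against indicator functions of $\tau$-fibers and using the linearity of $P\mapsto P(\text{cylinder})$ in the mixing measure. If a direct disintegration argument is awkward, an alternative is to avoid decomposing $\mu$ altogether: apply the scaling lemma directly to $P$, then push it through the representation to conclude that the equality $\int [p_{x_{[1:n]}} - p_{y_{[1:n]}}]\,\mu(dP)=0$ holds for all positive strings $x$ and shifts $y$, and finally argue that the integrand is a fixed sign (or can be separated) often enough to force $p$ homogeneous $\mu$-a.e.\ — but making ``often enough'' precise is essentially the same difficulty in a different guise.
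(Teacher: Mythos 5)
Your overall strategy matches the paper's: represent $P$ as a mixture of Markov measures (via Freedman/Diaconis--Freedman, using that $\tau$ is a function of $t$), force the $\Delta_{1,1}$-shape from $supp(P)=\D$, and use the scaling lemma to get homogeneity. Two points, one of which is a genuine gap.

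The gap: your homogeneity argument only covers rows $i\geq 1$. The scaling lemma is stated (and can only be stated) for strings with $x_i>0$ for all $i$, so it never probes transitions out of state $0$; your equivalence ``homogeneity $\Leftrightarrow$ shift-invariance of path probabilities for strictly positive strings'' therefore establishes $m_{ij}=m_{i+r,j+r}$ for $i,j\geq 1$ but says nothing about the zeroth row. Membership in $\Delta_{1,1}^{(h)}$ additionally requires that row $0$ equal row $1$ (look at the displayed matrix $M$: both rows are $(a_0,a_1,a_2,\dots)$, and this is exactly the boundary behaviour of the $M/G/1$ embedded chain, where a departure leaving an empty system and a departure leaving one customer behind generate the same increment law). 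The paper closes this with a separate, explicitly combinatorial step: $0101\sim_{\tau}0110$ forces $m_{01}=m_{11}$, then $01010\sim_{\tau}01100$ forces $m_{00}=m_{10}$, and the $S$-structure of $\tau$ gives $0101r\sim_{\tau}0110r$ for $r>1$, hence $m_{0r}=m_{1r}$ for every ergodic component $Q$. Without some such argument your mixture is only supported on $\Delta_{1,1}$-matrices that are homogeneous off the boundary, which is a strictly larger class (it is essentially the state-dependent-service situation the paper later contrasts with).

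On the disintegration issue you flag as ``the genuine obstacle'': the paper does not need your ad hoc argument about testing against indicators of $\tau$-fibers. This is precisely what Freedman's $S$-structure theorem (Theorem 1 of Freedman 1962) delivers, and it is the reason the paper proves beforehand that $\tau$ has $S$-structure on $\D$: a shift-invariant probability summarized by a statistic with $S$-structure decomposes into shift-ergodic components that are \emph{themselves} summarized by that statistic. Once you have that, the scaling lemma applies componentwise with no further work. You mention the $S$-structure theorem parenthetically but then do not use it for the one step where it is actually indispensable.
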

\begin{proof}
By David Freedman's $S$-structure Theorem $[$\citet[Theorem 1]{freedman1962invariants}$]$ it holds that $P$ is a mixture of shift-ergodic laws being themselves summarized by $\tau$.
Since $\tau$ is a function of $t$, $P$ is a mixture of MM $[$\citet[Theorem 2]{freedman1962invariants}$]$. Moreover, the space of MM's supporting the mixing measure $\mu$ consists of laws governed by stochastic matrices possessing $\Delta$-shape since $supp(P)=\D$. By above lemma, homogeneity of these matrices follows for all row indexes $i\geq 1$. To see, that the zeroth row has to equal the first, just note that $0101\sim_{\tau}0110 \Rightarrow m_{01}=m_{11}$. Using this and  $01010\sim_{\tau}01100$ it follows $m_{00}=m_{10}$. Now, since $\tau$ has $S$-structure, $0101r\sim_{\tau}0110r$ for $r>1$. But then, $Q\in MM$ being summarized by $\tau$ yields $Q(0101r)=Q(0110r)$ and this, in turn, $m_{0r}=m_{1r}$.
\end{proof}

To state \thref{mix} in the language of Choquet theory $[$see e.g. \citet{phelps2001lectures}$]$, the space of stationary measures that are summarized by the statistic $\tau$ is a simplex with boundary consisting of all Markov measures governed by homogenous $\Delta_{1,1}$-matrices. Any nontrivial convex mixture thus gives a barycenter in the interior of this simplex. Using an obvious parametrization, one can state the result rather statistically.

\begin{corollary}\thlabel{mixcor}
Let $X_{[1:\infty]}$ be a sequence of stationary data with state space $\N_0$ inducing a joint distribution which is summarized by $\tau$. 
Then for $n\in\N$ and all strings of data $x_{[1:n]}$ one has
\begin{align*}
\bP(X_j=x_j ;j=1,\dotsc,n)=\int\limits_{\Delta_{1,1}^{(h)}} \nu_{x_1} \prod\limits_{i=1}^{n-1} m_{x_i,x_{i+1}} \tilde{\mu}(d\nu, dM).
\end{align*}
\end{corollary}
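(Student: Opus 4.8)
The statement is essentially a restatement of Theorem \ref{mix} once one replaces the abstract description of the mixing measure by the concrete parametrization of a homogeneous $\Delta_{1,1}$ Markov measure by a pair $(\nu, M)$, where $M$ is the transition matrix and $\nu$ its unique invariant distribution. So the plan is to (1) invoke Theorem \ref{mix}, which already tells us that a stationary $P$ summarized by $\tau$ can be written as $\int_{MM[\Delta_{1,1}^{(h)}]} Q(\cdot)\,\mu(dQ)$ for a unique $\mu \in \cP(\Delta_{1,1}^{(h)})$; (2) observe that each $Q$ in the support is a Markov measure, hence for a cylinder event $\{X_j = x_j;\ j=1,\dots,n\}$ we have the explicit factorization $Q(X_j = x_j;\ j=1,\dots,n) = \nu_{x_1}\prod_{i=1}^{n-1} m_{x_i, x_{i+1}}$, where $\nu$ is the invariant initial law of $Q$; (3) push this pointwise identity through the integral against $\mu$, and rename the pushforward of $\mu$ under the bijection $Q \leftrightarrow (\nu, M)$ as $\tilde\mu$.

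First I would make precise the correspondence between $MM[\Delta_{1,1}^{(h)}]$ and the parameter space appearing in the integral. A stationary (shift-invariant) Markov measure $Q$ on $\D_\infty \subset \N_0^\N$ is determined by its one-dimensional marginal $\nu$ and its transition matrix $M$; conversely, given a homogeneous $\Delta_{1,1}$ matrix $M$ with $a'(1) < 1$, there is (as recalled in Section \ref{model}, citing \citet{chung1967markov, freedman1983markov}) a unique invariant $\nu$ with $\nu M = \nu$, and the pair determines a unique stationary $Q$. This map is a measurable bijection between $MM[\Delta_{1,1}^{(h)}]$ and the corresponding set of pairs $(\nu, M)$ (here one should note that the integral is written over $\Delta_{1,1}^{(h)}$ with $\nu$ appearing only for notational transparency, since $\nu$ is a function of $M$). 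Measurability holds because the cylinder-probability maps $Q \mapsto Q(X_j = x_j;\ j=1,\dots,n)$ generate the weak-convergence $\sigma$-field on $MM[\Delta_{1,1}^{(h)}]$, and each such map factors through $(\nu, M)$ in the polynomial way above.

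Next I would carry out the computation itself. Fix $n \in \N$ and a string $x_{[1:n]} \in \D_n$. For each $Q$ in the support, the Markov property and stationarity give $Q(X_j = x_j;\ j=1,\dots,n) = \nu_{x_1}^{(Q)} \prod_{i=1}^{n-1} m_{x_i, x_{i+1}}^{(Q)}$. Substituting into the representation from Theorem \ref{mix} and transporting the measure $\mu$ along the bijection above yields
\begin{align*}
\bP(X_j = x_j;\ j=1,\dots,n) = \int\limits_{MM[\Delta_{1,1}^{(h)}]} Q(X_j = x_j;\ j=1,\dots,n)\,\mu(dQ) = \int\limits_{\Delta_{1,1}^{(h)}} \nu_{x_1} \prod\limits_{i=1}^{n-1} m_{x_i, x_{i+1}}\,\tilde\mu(d\nu, dM),
\end{align*}
which is the claim. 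Uniqueness of $\tilde\mu$ follows from uniqueness of $\mu$ in Theorem \ref{mix} together with injectivity of the parametrization.

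The only genuine subtlety — and the step I would be most careful about — is the measurability/bijection argument tying $\mu \in \cP(\Delta_{1,1}^{(h)})$ to $\tilde\mu$: one must confirm that the assignment $M \mapsto \nu$ (solving $\nu M = \nu$ on the countable state space) is Borel measurable with respect to the topologies of weak convergence, so that $\tilde\mu$ is a well-defined Borel measure and the change of variables in the integral is legitimate. This is standard but deserves an explicit remark, e.g. by writing $\nu$ as a pointwise limit of $\nu^{(0)} M^k$ for a fixed reference $\nu^{(0)}$ (using the ergodicity established via \citet{abolnikov1991markov}), each term of which is plainly measurable in $M$. Everything else is a direct consequence of Theorem \ref{mix} and the definition of a stationary Markov measure.
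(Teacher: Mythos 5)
Your argument is correct and follows the same route the paper intends: the paper gives no separate proof of this corollary, introducing it only with the remark that it is Theorem \ref{mix} restated ``using an obvious parametrization,'' which is exactly your step of factoring each Markov measure $Q$ over cylinder sets as $\nu_{x_1}\prod_{i=1}^{n-1} m_{x_i,x_{i+1}}$ and transporting $\mu$ along the bijection $Q\leftrightarrow(\nu,M)$. Your additional care about the measurability of $M\mapsto\nu$ is a sensible elaboration of the paper's own observation that $\nu$ is an injective function of $M$, not a departure from its approach.
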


The corollary states that the problem of finding a prior distribution, modeling the mixing measure $\mu$, can be reduced to that of finding a random object which takes a.s. values in the space $\cP(\N_0)\times\Delta_{1,1}^{(h)}$ and whose distribution is analytically tractable. Obviously, the random objects $\nu$ and $M$ are dependent which complicates the model in general. However, since $\nu$ is the unique invariant distribution with respect to $M$, it is fully determined by $M$ and thus can be viewed as an injective function of $M$. This simplifies the mixture in \thref{mixcor} in the way that one merely has to take into account the distribution of the random stochastic $\Delta_{1,1}^{(h)}$ matrix. That is
\begin{align*}
\bP(X_j=x_j ;j=1,\dotsc,n)=\int\limits_{\Delta_{1,1}^{(h)}} \nu_{x_1}(M) \prod\limits_{i=1}^{n-1} m_{x_i,x_{i+1}} \hat{\mu}(dM),
\end{align*}
for a $\hat{\mu}$ suitably related to $\mu$.\\

It is known that measures summarized by a statistic can be described in ergodic theoretical terms. For instance, exchangeable probability measures, i.e. measures summarized by the order statistic, are invariant with respect to transformations induced by finite permutations. Furthermore, stationary measures summarized by transition counts can be argued to be invariant with respect to transformations induced by switching certain blocks. In both of these cases, there is a fact that simplifies the investigation of classes of equivalent strings, namely the multi-set of symbols through which a string passes is the same for all equivalent strings. However, in the situation considered here this is not necessarily true as above example shows. Hence, a description in terms of certain permutations of the corresponding sequence of increments seems more useful since their multi-sets are invariants for $\tau$. What can be certainly stated is that under $\tau$-equivalence invariance of the probability measure with respect to ``more'' than only block-switch transformations holds. It seems natural to investigate transformations induced by changes in the increments. In order to formalize this approach, denote for a string $x_{[1:n]}$ the string of ordered increments occurring in $x_{[1:n]}$ as $I[x_{[1:n]}]:=i_{[1:n-1]}(x_{[1:n]})$.

\begin{proposition}\thlabel{blockprop}
If $P$ is a stationary probability measure summarized by $\tau$, then $P$ is invariant with respect to the transformations induced by the following operations
\begin{enumerate}[(i)]
	\item switching two blocks whenever these have the same initial state and (a) end with the same symbol or (b) one ends with a $0$ and the other with a $1$,
	\item for a permutation $\sigma$ of $(k+1)$ elements, permuting a string of positive increments $\left(i_m, \dotsc,i_{m+k}\right)$ into $\left(i_{\sigma(m)},\dotsc, i_{\sigma(m+k)}\right)$.
\end{enumerate}
\end{proposition}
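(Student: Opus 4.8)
The plan is to reduce everything to the definition of $\tau$-equivalence. For part (i), I would argue that each of the two block-switch operations is a special case of a single string-level transformation that manifestly preserves the three recorded quantities: the initial state $a_1$, the count $\sum_k\delta_{a_k0}$, and the multiset of zero-adjusted increments $I(a_{[1:n]})$. Concretely, if the data string decomposes as $w = u\,B_1\,v\,B_2\,z$ where the blocks $B_1,B_2$ share the same initial symbol and terminate compatibly (either the same terminal symbol, case (a), or one ending in $0$ and the other in $1$, case (b)), then switching to $w' = u\,B_2\,v\,B_1\,z$ leaves all three components of $\tau_n$ unchanged. For the increment multiset this is the key point: the increments recorded \emph{internal} to $B_1$ and to $B_2$ are carried along unchanged when the blocks move, and the increments at the three junctions (entering $B_1$/$B_2$, leaving $B_1$/$B_2$ into $v$ or $z$) depend only on the shared initial state and on the terminal state — and here Lemma~\ref{tau} together with its role in the proof of the $S$-structure Proposition is exactly what makes case (b) legitimate, since an increment arriving at a block whose first symbol is the common initial state, and an increment leaving from a terminal $0$ versus a terminal $1$, carry the same $\tau$-information (recall ``observing an increment from $0$ to $r$ provides the same $\tau$-information as from $1$ to $r$''). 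The number-of-zeros count is preserved because case (a) moves equal blocks and case (b) trades a terminal $0$ in one block for a terminal $1$ in the other, so the total is unchanged. One must also check the moved string is still down-skip-free, which follows because the junction steps are down-skip-free by the compatibility of initial/terminal symbols, exactly as in the $S$-structure proof. Since $P$ is summarized by $\tau$, it assigns equal probability to $w$ and $w'$, hence is invariant under the induced transformation.

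For part (ii), the relevant observation is that permuting a contiguous block of \emph{positive} increments $(i_m,\dots,i_{m+k})$ by a permutation $\sigma$ does not change the multiset $\{i_m,\dots,i_{m+k}\}$, hence does not change $I(x_{[1:n]})$; and because all these increments are strictly positive, none of the corresponding coordinates of $x$ is $0$, so the zero-count $\sum_k\delta_{x_k0}$ is untouched, and $x_1$ is untouched (the permuted segment lies strictly to the right of the first coordinate, or if it begins at the first transition the initial \emph{state} is still fixed). I would use the invariance Lemma just above the theorem (the scaling lemma, or rather the decomposition identity in its proof) to make precise that $P(x_{[1:n]}\mid x_1)$ depends on the string only through $\tau_n(x_{[1:n]})$, so equality of $\tau$ forces equality of probabilities. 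The one technical point is that after reshuffling the increments the new string must remain in $\D_n$; since all the permuted increments are positive, the partial sums can only go up at those steps relative to what a down-skip could demand, so down-skip-freeness in that segment is automatic, and outside the segment nothing changed.

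The main obstacle I anticipate is the bookkeeping at the block junctions in part~(i), case~(b): one has to verify carefully that swapping a block ending in $0$ with a block ending in $1$ really does leave \emph{both} the increment multiset and the zero-count invariant simultaneously, and that the resulting string is still down-skip-free at all four affected transitions. This is precisely the content that Lemma~\ref{tau} and the $S$-structure Proposition were set up to deliver, so I would lean on them rather than re-deriving the junction analysis from scratch; modulo that, the proof is a direct translation of ``$\tau_n$ unchanged $\Rightarrow$ probability unchanged.''
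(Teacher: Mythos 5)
Your proposal is correct and follows essentially the same route as the paper's (much terser) proof: show that each operation leaves $\tau_n$ unchanged --- using for (i)(b) the fact that a zero-adjusted increment out of state $0$ carries the same $\tau$-information as one out of state $1$, and for (ii) that the accumulated increment over a block of positive increments is permutation-invariant, so no intermediate zeros appear --- and then conclude from summarization by $\tau$ that equivalent strings get equal probability. One small simplification: in (i) the zero count is preserved automatically because switching blocks merely permutes the symbols of the string, so the case analysis on terminal symbols is only needed for the junction increments and down-skip-freeness, not for $\sum_k\delta_{a_k 0}$.
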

\begin{proof}
One has to argue that the value of the statistic $\tau$ remains the same under above transformations. Since an increment from $0$ has the same observable character than an increment from $1$, the assertion of (i) follows from \thref{tau}. For (ii) note that permuting the order of increments within a block of positive increments does not change the accumulated increment over this block. Thus, the number of $0$'s of the string remains the same.
\end{proof}

The transformations induced by (i) and (ii) of \thref{blockprop} give necessary conditions. However, these are not sufficient, i.e. for two strings that are equivalent with respect to $\tau$ it is in general not possible to turn one into the other by only applying transformations of the mentioned types. As an example regard the strings $2324321\sim_{\tau} 2354321$. Note that it is not possible to generate state $5$ in the first string by solely applying transformations of type (i) and (ii). The reason is that also block-switch transformations of the increments are allowed that keep the number of zeros among the string constant. However, one can hardly formalize those transformations in a neat way and should stick to the description using the statistic $\tau$.

A further description of measures being summarized by the statistic $\tau$ can be given using a common ergodic theoretical categorization. This again reflects the fact that $\tau$ lies, in a sense, in-between the order statistic and the transition counts. Therefor, keep in mind that all probabilities summarized by the order statistic are invariant under finite permutations, and note from \citet{diaconis1980finetti} that stationary probabilities summarized by transition counts are invariant with respect to a certain subgroup of permutations. This subgroup consists of all permutations that can be described as transformations of blocks that begin with the same symbol and end with the same symbol and hence do not affect the transition counts.\\

Having clarified the statistical structure of the data we are observing, we continue by finding a suitable prior, i.e. by modeling the mixing measure $\mu$ in above theorem. We motivate this modeling by an urn process. For urn processes yielding suitable prior distributions for Bayesian statistics see e.g. \citet{blackwell1973ferguson}, \citet{hoppe1984polya} , \citet{fortini2012hierarchical} and references therein.
However, our prior can not be chosen in a way such that the rows of the stochastic matrix are seen to be independently sampled. This is due to the following fact $[$see \citet[Corollary 1]{fortini2014predictive}$]$. The rows of $M$ are stochastically independent with respect to $\mu$ if and only if transition counts together with recording the first state are predictively sufficient, i.e. if and only if the probability of observing the next datum only depends on the observation of the last state and the observed number of transitions out of this last state. This clearly fails in our context since transitions of the same magnitude are informative no matter what the starting state of these transitions was. \\

Consider the following situation. Suppose there is a countable infinite set $C$ called the color space. Without loss of generality, take $C=\N_0$. Furthermore, suppose there is an urn $U_i$ associated to each color $i\in C$, i.e. think of $U_i$ being colored by color $i$. Let $U_i$ contain initially $\alpha_i$ black balls and start drawing a black ball from urn $U_{x_1}$, where $x_1$ is chosen according to a (stationary) start distribution $p_0\in\cP(\N_0)$. Then, having drawn the black ball, replace it together with a ball of color $x_2$ sampled by a color distribution $c_{x_1}\in\cP(\N_0)$ and move to urn $U_{x_2}$. Once a colored ball is sampled from an urn, replace it together with another ball of this color and move to the urn of this color. Otherwise, continue as before. This is the general definition of a reinforced Hoppe urn process. However, in the here considered situation slight modifications are needed. Proceed as described but with the following constraints. Firstly, the initial number of black balls is the same for all urns, i.e. $\alpha_i=\alpha$, for all $i\in\N_0$. Secondly, if one draws a ball from urn $i\in \N_0$, then the support of the color sampling distribution $c_i$ is shrunk to $supp(c_i)=\N_0 \backslash \{0,1,\dots,i-2\}$ for $i>1$ and $supp(c_i)=\N_0$ for $i=0,1$. Moreover, the color sampling distributions fulfill the shift condition $c_i(\{j\})=c_0(\{j-i+1\})$. Thirdly, not only the present urn is reinforced but all urns are reinforced the following way. If one draws from $U_i$ a ball of color $j$ then replace it together with an additional ball of the same color and add to $U_k$, $k\neq i$, an additional ball of color $j-i+1-\delta_{k0}-\delta_{i0}$ and move to $U_j$. If a black ball is drawn from $U_i$, sample a color $l$ and replace the black ball together with the ball of that color. Additionally, add a ball of color $l-i+1-\delta_{k0}-\delta_{i0}$ to $U_k$, $k\neq i$. Now, let $X_{[1:\infty]}$ denote the process of the colors successively sampled according to above urn process. That yields the predictive scheme
\begin{align*}
X_{n+1}=\bullet|X_{[1:n]} \sim \frac{\alpha}{\alpha+n-1} c_0(\{ \bullet-X_n+1\})+\frac{1}{\alpha+n-1} \sum\limits_{i=1}^{n-1} \delta_{X_{i+1}-X_i+(1-\delta_{X_i 0})}(\{\bullet\}). \tag{P}
\end{align*}
Now, it is shown in \citet{fortini2012hierarchical} that a reinforced Hoppe urn process as introduced above is partially exchangeable. Further, it is well known from \citet{blackwell1973ferguson} that the right hand side of (P) converges for $n\rightarrow \infty$ to a Dirichlet process with base measure $\alpha c_0(\cdot)$. However, this is essentially the same whatever value $X_n$ takes except of the different shifting of $c_0$ in (P). This motivates the following choice of a model for the prior $\mu$. \\

Let $\mu\in\cP(\Delta_{1,1}^{(h)})$ be the distribution on the space of homogenous $\Delta_{1,1}$ stochastic matrices such that the $0^{th}$ row of $M\in\Delta_{1,1}^{(h)}$ is sampled according to a Dirichlet process with base measure $\alpha c_0(\cdot)$ and the $i^{th}$ row, $i\geq 1$, is a copy of the $0^{th}$ row but shifted $i-1$ times to the right and the resulting ``empty'' entries of the row filled with zeros. Furthermore, (P) tells one how to update that prior distribution by seeing the data $X_{[1:n]}$. \\

To summarize the present section, assume that the data $X_{[1:\infty]}$ forms a stationary process with law summarized by $\tau$. Thus, $X_{[1:\infty]}$ is a mixture of stationary Markov chains governed by  homogenous $\Delta_{1,1}$ stochastic matrices. The distribution of the random stochastic matrix, i.e. the prior, is such that it makes rows dependently sampled from a Dirichlet process with parameters $\alpha>0$ and $c_0(\cdot)\in \cP(\N_0)$. Symbolically we write 
\begin{align*}
M&\sim Dir^{(\Delta)}(\alpha c_0)\\
X_{[1:\infty]}|M &\stackrel{\textsf{MM}}{\sim} M.
\end{align*}

The posterior of $M$ after having seen data $X_{[1:n]}$ is given by 
\begin{align*}
M|X_{[1:n]} \sim Dir^{(\Delta)}(c_n),
\end{align*}
where $c_n$ is the discrete measure given through
\begin{align*}
c_n(\{k\})=\alpha c_0(\{k\}) + \sum\limits_{i=1}^{n-1} \delta_{X_{i+1}-X_i+(1-\delta_{X_i 0})}(\{k\}).
\end{align*}

Thus, the posterior guess on the stochastic matrix $M$ is given by 
\begin{align*}
\E[M|X_{[1:n]}]=
\begin{pmatrix}
	\bar{c}_n(\{0\}) &\bar{c}_n(\{1\}) &\bar{c}_n(\{2\}) &\bar{c}_n(\{3\})&\bar{c}_n(\{4\})  &\dots\\
	\bar{c}_n(\{0\}) &\bar{c}_n(\{1\}) &\bar{c}_n(\{2\}) &\bar{c}_n(\{3\})&\bar{c}_n(\{4\})  &\dots\\
	0&\bar{c}_n(\{0\}) &\bar{c}_n(\{1\}) &\bar{c}_n(\{2\}) &\bar{c}_n(\{3\})&\ddots\\
	0&0&\bar{c}_n(\{0\}) &\bar{c}_n(\{1\}) &\bar{c}_n(\{2\}) &\ddots\\
	0&0&0&\bar{c}_n(\{0\}) &\bar{c}_n(\{1\}) &\ddots\\
	0&0&0&0&\bar{c}_n(\{0\}) &\ddots\\
	\vdots & \vdots & \vdots& \vdots& \ddots&\ddots
\end{pmatrix},
\end{align*}
where $\bar{c}_n(\{\cdot\})=\frac{c_n(\{\cdot\})}{\alpha+n-1}$. The prior guess is given similarly.


\section{Inference}\label{inference}
\subsection{Estimators for queueing characteristics}

In section \ref{prior} a posterior law of parametric form for the interdeparture time distribution was obtained. Moreover, a non-parametric posterior law for the stochastic matrix $M$ governing the embedded Markov chain of the $M/G/1$-system was obtained based on a Dirichlet process sampling the $0^{th}$ row of $M$, which describes the probability for the number of customers who enter the system during a service time. 
Now, we will use both to obtain an estimator for the service time distribution. Obtaining a direct and tractable closed-form prior to posterior analysis for the service time distribution based on observations given through the marked departure process as described before seems hardly possible. Instead a natural approach can be given by the connection provided by the functional relation ship
\begin{align*}
g(z)=a\left(1-\frac{z}{\lambda}\right).
\end{align*}
Exploiting this, we define a plug-in estimator for the service-time LST $g(\cdot)$ after having seen data $(N(T),T)_{[1:n]}$ by 
\begin{align*}
\hat{g}_n(z):=\gamma_n\left(1-\frac{z}{\bar{\lambda}_n}\right),
\end{align*}
where $\bar{\lambda}_n=\E_{\Gamma}[\lambda|(T_{i+1}-T_i)_{[1:n]}]$ denotes the posterior expected value of the variable $\lambda$ under the prior specified in the previous section and $\gamma_n(z)=\sum\limits_{k=0}^{\infty}z^k \bar{c}_n(\{k\})=\E_{\D^{(\Delta)}}\left[\sum\limits_{k=0}^{\infty}z^k A(\{k\})|X_{[1:n]} \right]$ denotes the posterior expected value of $a(\cdot)$, the p.g.f. of the discrete distribution of $A_S$, which was previously denoted as $A(\{\cdot\})$ and itself is regarded as being random. Notice that the interchange of the sum and the limit is justified since $\E_{\D}\left[a(z)\right]=\E_{\D}\left[\int_{\N_0} z^k P^{A_S}(dk)\right]$ and for any $z\in[0,1]$ the mapping $k\mapsto z^k$ is a real valued measurable function and $\int_{\N_0} z^k \alpha(dk)<\infty$ by assumption. Thus, see e.g. \citet{feigin1989linear} or \citet{phadia2015prior}, it holds that $\Pi_{\D}\left(\int_{\N_0}z^k P(dk) <\infty \right)=1$ and $\E_{\D}\left[\int_{\N_0} z^k P(dk)\right]=\int_{\N_0} z^k \E_{\D}[P](dk)$.\\

Based on the estimator $\hat{g}_n(\cdot)$, we are able to give estimators for other values of interest. One of those is the traffic intensity. The traffic intensity appears in further characteristics as the LST of the waiting-time distribution or the p.g.f. of the queue-length distribution and hence is of particular interest. An immediate approach is given by defining a plug-in estimator for $\rho$ through $\hat{\rho}_n:=\bar{\lambda}_n \hat{\sigma}_n$,
where $\hat{\sigma}_n$ is given by $\hat{\sigma}_n=-\left[\frac{\partial}{\partial z}\hat{g}_n(z)\right]_{|_{z=0}}$. Such estimators can be problematic with respect to translating (uniform) large sample results for $\gamma_n(\cdot)$ to that for $\hat{\sigma}_n$. However, in the here considered situation things become easier since $\rho$ has a direct relation to the random variable $A_S$ whose values we assume to observe. Indeed, one has 
\begin{align*}
\hat{\rho}=\hat{\lambda}_n \hat{\sigma}_n=\hat{\lambda}_n \left[\sum\limits_{k=0}^{\infty} k\left(1-\frac{z}{\hat{\lambda}_n}\right)^{k-1} \frac{1}{\hat{\lambda}_n} \bar{c}_n(\{k\})\right]_{|_{z=0}} =\sum\limits_{k=1}^{\infty} k \bar{c}_n(\{k\})=\E_{\E_{\D;n}[P^{A_S}]}\left[A_S\right].
\end{align*}

Furthermore, the estimators for $g(\cdot)$ and $\rho$ enable us to define estimators for other queueing characteristics as e.g. the waiting-time distribution, the busy-time distribution and sojourn-time distribution exploiting similar functional relationships. For details on these relationships see e.g. \citet[chapter 7]{nelson2013probability}. Define estimators for the following exact queueing characteristics
\begin{itemize}
	\item p.g.f. of number of customers in queue: $\hat{q}_n(z)=\frac{(1-\hat{\rho}_n)(1-z)}{\hat{g}_n(\bar{\lambda}_n(1-z))-z}$,
	\item p.g.f. of number of customers in the system: $\hat{m}_n(z)=\hat{g}_n(\bar{\lambda}_n(1-z)) \hat{q}_n(z)$,
	\item LST of waiting time of a customer in queue: $\hat{w}_n(s)=\frac{s(1-\hat{\rho}_n)}{s-\bar{\lambda}_n+\bar{\lambda}_n\hat{g}_n(s)}$,
\end{itemize}
while for the number of customers served in a busy period as well as the length of the busy period itself only estimates for the associated functional equation can be given, i.e.
\begin{itemize}
	\item LST of busy period: $b(s)=\hat{g}_n(s+\bar{\lambda}_n[1-b(s)])$,
	\item p.g.f. of number of customers served in a busy period: $m_b(z)=z\hat{g}_n(z)(\bar{\lambda}_n[1-m_b(z)])$.
\end{itemize}
Solutions to these equations may be understood as estimators for the busy time LST and the p.g.f. of the number of customers served in a busy period. However, the goodness of those estimators w.r.t. large samples is in question not only from a applied point of view, i.e. due to deviations appearing from numerical approximations, but also from a theoretical viewpoint since it is not known if minor changes in $\lambda$ and $\gamma$ do lead to minor changes in the solution to the equations. \\

We continue by emphasizing the role of the special form of the stochastic matrix $M$ with respect to the $M$-invariant distribution $p$ of the Markov chain $X_{[1:\infty]}$. We point out that the specific appearance of $M$ allows to write down explicitly the invariant distribution as a function of $M$ in form of its transform. That is, the diagram
\begin{center}
\[
\begin{xy}
  \xymatrix{
      M \ar[r]^{\phi} \ar[d]^*[@]{\simeq}_{\psi}    &   p \ar[d]^*[@]{\simeq}_{\tilde{\psi}}  \\          
      a(\cdot) \ar[r]_{\xi}             &   \pi(\cdot)  
  }
\end{xy}
\]
\end{center}
commutes. Therein, $\psi$ on the left-hand side denotes the composition of the mapping that extends the distribution of $A_S$ appearing in the $0^{th}$ row of $M$ to the whole of $M$ and the mapping that maps the distribution of $A_S$ onto its p.g.f., while on the right-hand side $\tilde{\psi}$ describes the mapping that maps the distribution $p$ onto its p.g.f. $\pi$.
Recall that the mapping $\xi$ is given as 
\begin{align*}
\xi:a(z)\mapsto a(z)\frac{(1-z)(1-a'(1))}{a(z)-z}=:\pi(z).
\end{align*}
Certainly, such a description is not possible in general. It even fails for the case of a non-homogenous $\Delta_{1,1}$ stochastic matrix which governs the embedded Markov chain of $M/G/1$ with state-dependent service, see \citet[equation (4)]{harris1967queues}. This special feature of standard $M/G/1$ enables us to give a direct estimator for the p.g.f. of the distribution of the system size at instants of departing customers which is, by the PASTA property, the same for any arbitrary instant of time. This estimator is given through
\begin{align*}
\hat{\pi}_n(z)=\gamma_n(z)\frac{(1-z)(1-\gamma_n'(1))}{\gamma_n(z)-z}.
\end{align*}

\subsection{Posterior consistency}

The estimators just defined are obvious ones, yet deserve some further theoretical justification. This will be given by posterior consistency which, roughly speaking, states that the mass of the posterior law will center around the true data-generating measure. To be more precise, let for a random probability measure $P\in \cP^{\Omega}$ a prior $\Pi\in\cP(\cP)$ be given. Further, let data $Y_{[1:\infty]}$ be given such that  $Y|P\stackrel{iid}{\sim} P$. Then, denote by $(\Pi_n)_{n\in\N_0}$ the sequence of posterior laws of $P$ given observed data $Y_{[1:n]}$, i.e. $\Pi_n(C)=\Pi(P\in C|Y_{[1:n]})$ and for the sake of completeness $\Pi_0:=\Pi$, for all sets $C$ in the sigma field induced by weak convergence of measures. The sequence $(\Pi_n)_{n\in\N_0}$ is called consistent at the true data-generating distribution $P_0$ if for $P_0$-almost all data sequences it holds that $\Pi_n\stackrel{w, n\rightarrow \infty}{\longrightarrow}\delta_{P_0}$.
First of all we state posterior consistency of the parametric sequence of posteriors for the interarrival rate $\lambda$.
\begin{lemma}\thlabel{rateconsistency}
For almost all sequences $T_{[1:\infty]}$ and for any $\epsilon>0$ it holds that 
\begin{align*}
\Pi_{\Gamma;n}([\lambda_0-\epsilon,\lambda_0+\epsilon])\stackrel{n\rightarrow \infty}{\longrightarrow} 1,
\end{align*}
where $\Pi_{\Gamma}$ denotes the prior distribution for $\lambda$ as specified in section \ref{prior} and $\lambda_0$ the true interarrival rate.
\end{lemma}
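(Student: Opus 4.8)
The plan is to exploit the conjugacy already established in Section~\ref{prior}: the posterior law of $\lambda$ given the interdeparture times $D_{[1:n]}$, $D_i:=T_{i+1}-T_i$, is $\Gamma(a+n,\,b+S_n)$ with $S_n:=\sum_{i=1}^n D_i$, so that the whole question reduces to controlling this explicit one‑parameter family of Gamma laws along $P_0$‑typical sequences. First I would invoke the strong law of large numbers: under the true model the $D_i$ are i.i.d.\ $\mathcal{E}(\lambda_0)$ (this is exactly the time‑reversibility/Burke‑type fact recalled in Section~\ref{model}, which makes the departure stream a Poisson process of rate $\lambda_0$), hence $S_n/n\to 1/\lambda_0$ for $P_0$‑almost every sequence $T_{[1:\infty]}$. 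Fix such a sequence for the rest of the argument.

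Next I would compute the first two posterior moments. The posterior mean is $\E_{\Gamma}[\lambda\mid D_{[1:n]}]=(a+n)/(b+S_n)$, and since $(b+S_n)/(a+n)=\bigl(b/n+S_n/n\bigr)/\bigl(a/n+1\bigr)\to 1/\lambda_0$ we get $\E_{\Gamma}[\lambda\mid D_{[1:n]}]\to\lambda_0$. The posterior variance is $\mathrm{Var}_{\Gamma}[\lambda\mid D_{[1:n]}]=(a+n)/(b+S_n)^2$; since $b+S_n\sim n/\lambda_0$ this is asymptotic to $\lambda_0^2/n$, hence tends to $0$.

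Then a Chebyshev estimate finishes the proof. Given $\epsilon>0$, choose $N$ so large that $\lvert\E_{\Gamma}[\lambda\mid D_{[1:n]}]-\lambda_0\rvert<\epsilon/2$ for all $n\ge N$; then for such $n$,
\[
\Pi_{\Gamma;n}\bigl(\lvert\lambda-\lambda_0\rvert>\epsilon\bigr)
\le \Pi_{\Gamma;n}\Bigl(\bigl\lvert\lambda-\E_{\Gamma}[\lambda\mid D_{[1:n]}]\bigr\rvert>\epsilon/2\Bigr)
\le \frac{4\,\mathrm{Var}_{\Gamma}[\lambda\mid D_{[1:n]}]}{\epsilon^2}\xrightarrow[n\to\infty]{}0,
\]
which is precisely $\Pi_{\Gamma;n}([\lambda_0-\epsilon,\lambda_0+\epsilon])\to 1$. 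Since the only exceptional null set used is the single SLLN null set, which does not depend on $\epsilon$, the statement holds for almost all sequences $T_{[1:\infty]}$ simultaneously.

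There is no genuinely hard step here; the only points needing care are the identification of the true data‑generating mechanism as producing i.i.d.\ $\mathcal{E}(\lambda_0)$ interdeparture times (so that the SLLN limit is indeed $1/\lambda_0$) and keeping the almost‑sure set independent of $\epsilon$. One could instead cite Doob's consistency theorem, but it yields consistency only for $\Pi_{\Gamma}$‑almost all $\lambda_0$ and via a less transparent route, whereas the direct moment computation above gives the stated neighborhood statement for \emph{every} $\lambda_0>0$.
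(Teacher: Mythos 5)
Your proof is correct and follows essentially the same route as the paper's: conjugacy gives the explicit $\Gamma(a+n,\,b+S_n)$ posterior, the SLLN yields convergence of the posterior mean and vanishing of the posterior variance, and a Chebyshev/Markov bound concludes. You in fact state the limit of the posterior mean correctly as $\lambda_0$ (the paper's proof writes $1/\lambda_0$, which is a slip), and your remark about keeping the exceptional null set independent of $\epsilon$ is a small but welcome extra precision.
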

\begin{proof}
Let $\{Di\}_i$, $D_i:=T_{i+1}-T_i$ be the exponentially distributed interdeparture time data. By conjugacy of the gamma-distribution with respect to exponential likelihoods and well known properties of the gamma distribution, the posterior expected value of the arrival rate is given by 
\begin{align*}
\E_{\Gamma}[\lambda|D_{[1:n]}]=\frac{a+n}{b+\sum_{i=1}^{n}\left[D_i\right]},
\end{align*}
where $(a,b)\in\R_+^2$ are the prior parameters. Moreover, the posterior variance is given by
\begin{align*}
\mathbb{V}_{\Gamma}[\lambda |D_{[1:n]}]=\frac{a+n}{\left(b+\sum_{i=1}^{n}\left[D_i\right]\right)^2}.
\end{align*}
Thus, by means of the SLLN and the continuous mapping theorem, $\E_{\Gamma}[\lambda|D_{[1:n]}]\stackrel{n\rightarrow \infty}{\longrightarrow}1/\lambda_0$ and $\mathbb{V}_{\Gamma}[\lambda |D_{[1:n]}]\stackrel{n\rightarrow \infty}{\longrightarrow}0$ such that the assertion of the lemma follows from a straight forward application of the Markov inequality.
\end{proof}

Next, we study the posterior consistency of the random stochastic matrix $M\in \left[\Delta_{1,1}^{(h)}\right]^{\Omega} \subset\kS^{\Omega}$. Virtually, this task requires an extended definition of posterior consistency. So, let $M$ be a random matrix and let $Y_{[1:\infty]}$ be a Markov chain with countable state space that, given $M$, is governed by $M$, i.e. $Y_{[1:\infty]}\mid M\stackrel{MC}{\sim} M$. Let a prior $\Pi$ be given for $M$ and, as before, denote by $(\Pi(\cdot|Y_{[1:n]}))_{n\in\N_0}=((\Pi_n(\cdot))_{n\in\N_0}$ the sequence of posterior laws of $M$. Call $(\Pi_n)_{n\in\N_0}$ consistent if for $M_0$-almost all sequences of data $Y_{[1:\infty]}$ it holds that $\Pi_n(C_0)\stackrel{n\rightarrow \infty}{\longrightarrow}1$, for all sets $C_0$ in the sigma field on $\kS$ induced by coordinate-wise convergence containing $M_0$, the true stochastic matrix governing the data. Here, $M_0$-almost all sequences of data means the smallest set of data strings which has full mass under the stationary Markov probability measure that is induced by $M_0$. Next, we show the posterior consistency of the random matrix $M$.

\begin{lemma}\thlabel{matrixconsistency}
For almost all sequences of data $N(T)_{[1:\infty]}$ and all measurable neighborhoods $C_0$ of the true stochastic matrix $M_0$ governing the embedded Markov chain of $M/G/1$ it holds for the prior $\Pi_{\D^{(\Delta)}}$ specified in section 3 that
\begin{align*}
\Pi_{\D^{\Delta};n}(C_0)\stackrel{n \rightarrow \infty}{\longrightarrow} 1.
\end{align*}
\end{lemma}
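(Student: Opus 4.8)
The plan is to reduce the assertion to classical posterior asymptotics of a Dirichlet process on $\cP(\N_0)$. First I would use the structure established in \thref{mix}: a matrix $M\in\Delta_{1,1}^{(h)}$ is completely determined by its $0^{th}$ row, i.e.\ by the distribution $(a_k)_{k\in\N_0}$ of $A_S$, and the map sending this distribution to the full matrix is continuous from $\cP(\N_0)$ (weak topology) into $\kS$ equipped with coordinate-wise convergence. Moreover a basic coordinate-wise neighborhood $C_0$ of $M_0$ constrains only finitely many entries $m_{ij}$, hence only finitely many of the $a_k$'s. It therefore suffices to show that the marginal posterior of each fixed $a_k$ concentrates at the true value $a^0_k:=\bP_{M_0}(A_S=k)$; a union bound over the finitely many relevant indices then gives $\Pi_{\D^{\Delta};n}(C_0)\to 1$.

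Second, I would identify what the data actually feed into the posterior. From the recursion $N(T_{n+1})=N(T_n)+A_{S_{n+1}}-[1-\delta_{N(T_n)}(\{0\})]$ one reads off that the zero-adjusted increment $\iota_n:=N(T_{n+1})-N(T_n)+(1-\delta_{N(T_n)}(\{0\}))$ equals $A_{S_{n+1}}$, and the latter are i.i.d.\ with law $(a^0_k)_{k}$ because the service times are i.i.d.\ and the Poisson arrival stream has independent increments over the disjoint service intervals. Hence the posterior base measure $c_n=\alpha c_0+\sum_{i=1}^{n-1}\delta_{\iota_i}$ from section \ref{prior} is exactly the update of a Dirichlet process by $n-1$ i.i.d.\ draws from $a^0$. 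Since $M_0$ governs a stable $M/G/1$ embedded chain it is positive recurrent, so the stationary law referred to in the statement exists and $n-1\to\infty$ for $M_0$-almost all data sequences.

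Third, I would invoke the elementary behaviour of the Dirichlet posterior, mirroring the proof of \thref{rateconsistency}. For every fixed $k$ the posterior mean $\bar c_n(\{k\})=\frac{\alpha c_0(\{k\})+\sum_{i=1}^{n-1}\delta_{\iota_i}(\{k\})}{\alpha+n-1}$ converges $M_0$-a.s.\ to $a^0_k$ by the strong law of large numbers, while the posterior variance of $a_k$ equals $\frac{\bar c_n(\{k\})(1-\bar c_n(\{k\}))}{\alpha+n}\to 0$. A Chebyshev/Markov estimate then yields $\Pi_{\D^{\Delta};n}(|a_k-a^0_k|>\epsilon)\to 0$ for every $\epsilon>0$, and combining this with the finite-dimensional reduction of the first step completes the argument.

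The main obstacle is the reduction carried out in the second step. One has to be careful that the notion of consistency here is \emph{not} the textbook i.i.d.-data one but concerns data generated by the stationary Markov chain of $M_0$; the real content is that this Markov likelihood collapses, via the zero-adjusted increments, to an i.i.d.\ sampling model for $A_S$, on which the Dirichlet process machinery is directly available. Once this identification is in place, the remaining ingredients (continuity of the row-to-matrix map, finite-dimensionality of coordinate-wise neighborhoods, and the posterior mean/variance asymptotics) are routine.
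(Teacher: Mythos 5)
Your proposal is correct and follows essentially the same route as the paper's own proof: reduce via the row-to-matrix map to posterior consistency of the $0^{th}$ row, i.e.\ of the distribution of $A_S$, observe that the posterior is again a Dirichlet process with base measure $c_n$, and conclude from standard Dirichlet-process convergence properties. The only difference is one of completeness: you make explicit two steps the paper delegates to a citation, namely that the zero-adjusted increments are i.i.d.\ draws from the true law of $A_S$ and that consistency then follows from the posterior mean/variance asymptotics via Chebyshev, which is a welcome filling-in rather than a departure.
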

\begin{proof}
Since $M_0\in \Delta_{1,1}^{(h)}$, it suffices to regard all neighborhoods of $M_0$ contained in the trace sigma field induced by $\Delta_{1,1}^{(h)}$. But then, using mapping $\psi$ in above diagram, it is enough to show consistency for the $0^{th}$ row of $M$, which is nothing but the distribution of the variable $A_S$. Since the posterior, emerging from the Dirichlet process prior updated in a manner as described before by data $X_{[1:n]}$,  is as well a Dirichlet process with updated base measure 
\begin{align*}
c_n(\{k\})= \alpha c_0(\{k\})+ \sum\limits_{i=1}^{n-1}\delta_{X_{i+1}-X_i+(1-\delta_{X_i 0})}(\{k\}),
\end{align*}
posterior consistency of $M$ follows from convergence properties of that prior process, see e.g. \citet[chapter 3]{ghosh2003bayesnonp}.
\end{proof}

The posterior consistency of the random matrix immediately yields consistency of the Bayes estimator for the stochastic matrix and for the p.g.f. of $A_S$, respectively.

\begin{corollary}\thlabel{matrixcorollary}
For $M_0$-almost all sequences of data $X_{[1:n]}:=N(T)_{[1:n]}$, it holds that 
\begin{enumerate}[(i)]
	\item $\E_{\D^{\Delta}}\left[M\mid X_{[1:n]} \right]\stackrel{n\rightarrow \infty}{\longrightarrow} M_0$,
	\item $\textsf{Var}_{\D^{\Delta}}\left[M\mid X_{[1:n]} \right]\stackrel{n\rightarrow \infty}{\longrightarrow} 0$,
	\item for any $\tau>0$, $\sup\limits_{z\in[0,\tau]} \left|\gamma_n(z)-a_0(z)\right|\stackrel{n\rightarrow \infty}{\longrightarrow} 0$.
\end{enumerate}
\end{corollary}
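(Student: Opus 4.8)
The plan is to read off (i) and (ii) from \thref{matrixconsistency} together with the fact that every entry of $M$ lies in $[0,1]$, and then to derive (iii) from (i) by a total‑variation estimate. For (i) and (ii): \thref{matrixconsistency} states that, for $M_0$‑almost every data sequence and for each coordinate $(i,j)$ and $\varepsilon>0$, $\Pi_{\D^{\Delta};n}\bigl(\{M:|m_{ij}-(M_0)_{ij}|<\varepsilon\}\bigr)\to1$, i.e.\ the marginal posterior law of the bounded variable $m_{ij}$ converges weakly to $\delta_{(M_0)_{ij}}$. Since $m_{ij}\in[0,1]$, weak convergence to a point mass upgrades to convergence of first and second moments, so $\E_{\D^{\Delta}}[m_{ij}\mid X_{[1:n]}]\to(M_0)_{ij}$ and $\textsf{Var}_{\D^{\Delta}}[m_{ij}\mid X_{[1:n]}]\to0$ coordinate‑wise, which is exactly (i) and (ii). Concretely, and with a rate, one can instead invoke the proof of \thref{matrixconsistency}: conditionally on $X_{[1:n]}$ the $0^{th}$ row is $\mathrm{Dir}(c_n)$ with total mass $m_n=\alpha+n-1$, so its posterior mean is the regularized empirical mass function $\bar c_n(\{k\})=\bigl(\alpha c_0(\{k\})+\sum_{i=1}^{n-1}\delta_{Z_i}(\{k\})\bigr)/(\alpha+n-1)$, where $Z_i:=X_{i+1}-X_i+(1-\delta_{X_i0})=A_{S_{i+1}}$ are i.i.d.\ with law $a_0$ under $M_0$, hence $\bar c_n(\{k\})\to a_0(\{k\})$ by the strong law; the posterior variance at $\{k\}$ is $\bar c_n(\{k\})(1-\bar c_n(\{k\}))/(m_n+1)\le 1/(4(\alpha+n))$, and the $\Delta_{1,1}^{(h)}$ homogeneity propagates both facts to every entry of $M$.

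For (iii) I would use that the $0^{th}$‑row part of (i) is precisely pointwise convergence of the probability mass functions $\bar c_n$ to the probability mass function $a_0$ on $\N_0$; by \emph{Scheffé's lemma} this upgrades, $M_0$‑almost surely, to $\sum_{k\ge0}\bigl|\bar c_n(\{k\})-a_0(\{k\})\bigr|\to0$. Then for $z\in[0,1]$,
\[
\bigl|\gamma_n(z)-a_0(z)\bigr|\le\sum_{k\ge0}z^k\,\bigl|\bar c_n(\{k\})-a_0(\{k\})\bigr|\le\sum_{k\ge0}\bigl|\bar c_n(\{k\})-a_0(\{k\})\bigr|\longrightarrow0,
\]
and the right‑hand bound is free of $z$, so the convergence is uniform on $[0,\tau]$ for every $\tau\le1$. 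For $\tau>1$ (which tacitly presumes $\tau$ inside the radius of convergence of $a_0$, equivalently that $S$ has an exponential moment under $M_0$) I would split the series at a cut‑off $K$: the head $\sum_{k\le K}\tau^k\bigl|\bar c_n(\{k\})-a_0(\{k\})\bigr|\le\tau^K\sum_{k\le K}\bigl|\bar c_n(\{k\})-a_0(\{k\})\bigr|\to0$ for each fixed $K$, while the tail $\sum_{k>K}\tau^k(\bar c_n(\{k\})+a_0(\{k\}))$ is made small uniformly in $n$ by choosing $\tau'$ between $\tau$ and the radius of convergence of $a_0$, noting $\sum_k(\tau')^k\bar c_n(\{k\})\to a_0(\tau')<\infty$ by the strong law applied to the integrable functional $(\tau')^{Z_i}$, and bounding the tail by $(\tau/\tau')^K\sum_k(\tau')^k\bar c_n(\{k\})$.

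I expect the only genuine work to be this last point. For $\tau\le1$ the corollary is an immediate consequence of \thref{matrixconsistency}, the boundedness of the $m_{ij}$, and Scheffé's lemma, with no additional hypotheses; the regime $\tau>1$ is not free, since the crude total‑variation bound breaks down and one must control the tails of the (here i.i.d.) averages of the exponential functionals uniformly in $n$, which quietly invokes a moment/analyticity assumption on the service‑time distribution that is irrelevant for the rest of the statement.
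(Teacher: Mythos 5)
Your proposal is correct, and for part (iii) it is in fact more careful than the paper's own argument. The paper treats (i) and (ii) exactly as you do --- as immediate consequences of \thref{matrixconsistency} --- and for (iii) it starts from the same bound $\sup_{z\in[0,\tau]}|\gamma_n(z)-a_0(z)|\leq\sum_{k\geq0}\tau^k\,|\bar c_n(\{k\})-a_0(\{k\})|$ that you use, but then passes this further to $\sum_k\tau^k\,\E_{\D^{\Delta}}\bigl[\,|P^{A_S}(\{k\})-P_0^{A_S}(\{k\})|\,\big|\,X_{[1:n]}\bigr]$ and simply declares that the limit follows from (i) ``and the monotone convergence theorem.'' That appeal is dubious: the summands are not monotone in $n$, so what is really needed is exactly the domination/uniform-tail control you supply. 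Your route --- Scheff\'e's lemma to get $\ell^1$ convergence of $\bar c_n$ to $a_0$ (settling $\tau\leq1$ outright), and for $\tau>1$ a truncation at level $K$ with the tail controlled via the SLLN applied to $(\tau')^{Z_i}$ for $\tau'$ strictly inside the radius of convergence --- is the honest version of the same estimate, and your observation that the case $\tau>1$ tacitly requires $a_0$ (equivalently the service-time distribution, via $a_0(\tau)=g(\lambda(1-\tau))$) to have a finite exponential moment of the relevant order identifies a genuine hidden hypothesis that the paper never states. The only cosmetic point worth adding is that the tail bound also needs $\sum_k(\tau')^k c_0(\{k\})<\infty$ for the prior contribution to $\bar c_n$, but this is of the same nature as the assumption you already flag.
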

\begin{proof}
The assertions of (i) and (ii) just follows as necessary consequences of \thref{matrixconsistency}. For (iii) note that one has
\begin{align*}
&\lim\limits_{n\rightarrow\infty}\sup\limits_{z\in[0,\tau]} \left|\gamma_n(z)-a_0(z)\right|\leq \lim\limits_{n\rightarrow\infty} \sup\limits_{z\in[0,\tau]} \sum\limits_{k=0}^{\infty} z^k \left|\bar{c}_n(\{k\})-P_0^{A_S}(\{k\})\right|\\
&\leq \lim\limits_{n\rightarrow\infty} \sum\limits_{k=0}^{\infty}\tau^k \left|\bar{c}_n(\{k\})-P_0^{A_S}(\{k\})\right|
\leq  \lim\limits_{n\rightarrow\infty} \sum\limits_{k=0}^{\infty} \tau^k \E_{\D^{\Delta}}\left[\left|P^{A_S}(\{k\})-P_0^{A_S}(\{k\})\right| \bigg\mid X_{[1:n]}\right],
\end{align*}
such that the assertion of (iii) follows from (i) and the monotone convergence theorem.
\end{proof}

So far we established posterior consistency with respect to the direct observables. Now we show that the indirect estimator defined above possesses certain consistency properties as well. Recall that the LST of the service time distribution is expressed in terms of the p.g.f. of the distribution $A_S$ which in turn is a power series. Thus, having on mind techniques from complex analysis, it seems natural to undertake the investigation of posterior consistency within a framework that reflects this analytic approach. Hence, posterior consistency of the LST of the service time distribution will be stated as a kind of a.s. compact convergence inside the posterior law. Call a series of functions compact convergent to a limit function if its restriction to any compact set converges uniformly.
Therefor, let $g_0(\cdot)$ denote the the LST of the true service time distribution $G_0(\cdot)$, i.e. $g_0(z)=\int_0^{\infty} e^{-zs}dG_0(s)$.

\begin{theorem}
For almost all data sequences $(N(T),T)_{[1:n]}$, all $R>0$ and all $\epsilon>0$ it holds true that
\begin{align*}
\bP\left(\sup\limits_{z\in[0,R]} \left|\hat{g}_n(z)-g_0(z)\right| \geq \epsilon \bigg\mid (N(T),T)_{[1:n]}\right)\stackrel{n\rightarrow \infty}{\longrightarrow} 0.
\end{align*}
\end{theorem}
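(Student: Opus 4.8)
The plan is to first strip away the probabilistic packaging of the statement. Both $\bar{\lambda}_n=\E_\Gamma[\lambda\mid(T_{i+1}-T_i)_{[1:n]}]$ and $\gamma_n=\E_{\D^{\Delta}}[a(\cdot)\mid X_{[1:n]}]$, and hence $\hat{g}_n$, are measurable functions of the observed data $(N(T),T)_{[1:n]}$, so the conditional probability in the claim is simply the indicator $\mathbbm{1}\{\sup_{z\in[0,R]}|\hat{g}_n(z)-g_0(z)|\ge\epsilon\}$. It thus suffices to prove that $\sup_{z\in[0,R]}|\hat{g}_n(z)-g_0(z)|\to 0$ for $\bP$-almost every data sequence, and I would record at the outset the identity $g_0(z)=a_0(1-z/\lambda_0)$, which is \eqref{LST} read for the true parameters.

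I would then insert the intermediate quantity $a_0(1-z/\bar{\lambda}_n)$ and bound, for $z\in[0,R]$, $|\hat{g}_n(z)-g_0(z)|\le(\mathrm{I})+(\mathrm{II})$ with $(\mathrm{I})=|\gamma_n(1-z/\bar{\lambda}_n)-a_0(1-z/\bar{\lambda}_n)|$ (accuracy of the estimated generating function) and $(\mathrm{II})=|a_0(1-z/\bar{\lambda}_n)-a_0(1-z/\lambda_0)|$ (effect of plugging in $\bar{\lambda}_n$). As a preliminary I would extract from the proof of \thref{rateconsistency} that $\bar{\lambda}_n=(a+n)/(b+\sum_{i\le n}D_i)\to\lambda_0$ almost surely by the SLLN; fixing a data sequence on which this and the conclusions of \thref{matrixcorollary} hold, for large $n$ one has $\bar{\lambda}_n\ge\lambda_0/2$, so all the arguments $1-z/\bar{\lambda}_n$ and $1-z/\lambda_0$ with $z\in[0,R]$ stay in a single fixed compact interval $J=J(R,\lambda_0)\subset(-\infty,1]$.

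For term $(\mathrm{II})$ I would use that $a_0$, being the restriction to $J$ of the continuous map $w\mapsto g_0(\lambda_0(1-w))$, is uniformly continuous on $J$, combined with $\sup_{z\in[0,R]}|z/\bar{\lambda}_n-z/\lambda_0|=R\,|\bar{\lambda}_n^{-1}-\lambda_0^{-1}|\to 0$; this gives $\sup_{z\in[0,R]}(\mathrm{II})\to 0$. For term $(\mathrm{I})$ I would observe that the chain of estimates used to prove \thref{matrixcorollary}(iii) is insensitive to the sign of its argument (one may replace $z^k$ by $|w|^k$ throughout), so it upgrades from $[0,\tau]$ to $\sup_{w\in J}|\gamma_n(w)-a_0(w)|\le\sum_{k\ge 0}\tau^k\,\E_{\D^{\Delta}}[\,|P^{A_S}(\{k\})-P_0^{A_S}(\{k\})|\mid X_{[1:n]}\,]$ for $\tau=\tau(R,\lambda_0)\ge 1$; the right-hand side tends to $0$ by parts (i)--(ii) of \thref{matrixcorollary} together with dominated convergence. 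Assembling the two bounds yields $\sup_{z\in[0,R]}|\hat{g}_n(z)-g_0(z)|\to 0$.

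The main obstacle is exactly the series estimate for $(\mathrm{I})$ when $R$ is large. For $R<2\lambda_0$ one has $J\subset[-1,1]$, the majorant $\sum_k\tau^k(\bar{c}_n(\{k\})+P_0^{A_S}(\{k\}))$ is bounded by $2$, and nothing further is needed. Once $\tau>1$, however, one must first ensure that $\gamma_n(w)$ is even finite on $J$, and then supply a summable, $n$-uniform majorant for the dominated-convergence step; both require a light-tail hypothesis on the true service-time distribution $G_0$ and on the Dirichlet base measure $c_0$ (e.g.\ finite exponential moments, so that $a_0$ and the $\gamma_n$ are defined on an interval strictly larger than $[-1,1]$) --- a hypothesis already implicitly needed for \thref{matrixcorollary}(iii) with $\tau>1$. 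In the write-up I would make this condition explicit, or else restrict $R$ accordingly.
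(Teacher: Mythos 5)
Your proof is correct in substance and follows the same route as the paper: the same triangle-inequality split through the intermediate quantity $a_0(1-z/\bar{\lambda}_n)$ (the paper's terms $X$ and $Y$ are exactly your (I) and (II)), followed by an appeal to \thref{rateconsistency} and \thref{matrixcorollary}. Two points of comparison are worth recording. First, your opening observation that $\hat{g}_n$ and $g_0$ are measurable functions of the conditioning data, so that the stated conditional probability degenerates to an indicator and the theorem reduces to almost-sure convergence of the supremum, is a genuine clarification: the paper's own proof bounds $\bP(Y\geq\epsilon/2\mid\cdot)$ and $\bP(X\geq\epsilon/2\mid\cdot)$ by posterior probabilities $\Pi_{\Gamma}(\cdots)$ and $\Pi_{\D^{(\Delta)}}(\cdots)$ of events that are themselves data-measurable, which only makes sense after your reduction (or after reinterpreting $\lambda$ and $a$ inside those events as posterior-random); your version removes this ambiguity and, as a bonus, delivers the subsequent a.s.\ uniform-convergence theorem in the same stroke. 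Second, the caveat you raise about large $R$ is a real issue, not a defect of your write-up: for $z\in[0,R]$ with $R>2\lambda_0$ the argument $1-z/\bar{\lambda}_n$ leaves $[-1,1]$, and both the absolute convergence of $\gamma_n(w)=\sum_k w^k\bar c_n(\{k\})$ and the $n$-uniform summable majorant needed for the dominated-convergence step require exponential moments of $G_0$ (equivalently of $A_0$) and of the base measure $c_0$. The paper's proof, and likewise part (iii) of \thref{matrixcorollary} for $\tau>1$, silently uses the finiteness of $\sum_k\tau^k\bar c_n(\{k\})$ and $\sum_k\tau^k A_0(\{k\})$ without stating this hypothesis, so making it explicit (or restricting $R$) as you propose is the right call.
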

\begin{proof}
Let $R>0$ and $\epsilon>0$ be arbitrarily chosen real numbers. Define
\begin{align*}
&X:=\sup\limits_{z\in[0,R]} \left|\sum\limits_{k=0}^{\infty} \left(1-\frac{z}{\bar{\lambda}_n}\right)^k \bar{c}_n(\{k\}) - \sum\limits_{k=0}^{\infty}  \left(1-\frac{z}{\bar{\lambda}_n}\right)^k A_0(\{k\})\right|,\\
&Y:=\sup\limits_{z\in[0,R]} \left|\sum\limits_{k=0}^{\infty} \left(1-\frac{z}{\bar{\lambda}_n}\right)^kA_0(\{k\})-\sum\limits_{k=0}^{\infty} \left(1-\frac{z}{\lambda_0}\right)^kA_0(\{k\})\right|
\end{align*}
Then one has
\begin{align*}
&\bP\left(\sup\limits_{z\in[0,R]} \left|\hat{g}_n(z)-g_0(z)\right| \geq \epsilon \bigg\mid (N(T),T)_{[1:n]}\right)\\
&=\bP\left(\sup\limits_{z\in[0,R]} \left|\gamma\left(1-\frac{z}{\bar{\lambda}_n}\right)-a_0\left(1-\frac{z}{\lambda_0}\right)\right|\geq \epsilon \bigg\mid (N,T(N))_{[1:n]} \right)\\
&\leq \bP\left(X+Y\geq \epsilon, Y\geq \epsilon/2| (N,T(N))_{[1:n]}  \right)+ P\left(X+Y\geq \epsilon , Y< \epsilon/2 \mid (N,T(N))_{[1:n]}  \right)\\
&\leq \bP\left( Y\geq \epsilon/2| (N,T(N))_{[1:n]}  \right)+ P\left(X\geq \epsilon/2 \mid (N,T(N))_{[1:n]}  \right).
\end{align*}
Exploiting the independence assumption between $\lambda$ and $M$, for the first addend it follows
\begin{align*}
 &\bP\left( Y\geq \epsilon/2 \mid (N,T(N))_{[1:n]}  \right)\\
& \leq \Pi_{\Gamma}\left(\sum\limits_{k=0}^{\infty} A_0(\{k\}) \sum\limits_{i=0}^k  R^{k-i}\left|\bar{\lambda}_n^{-(k-i)}-\lambda_0^{-(k-i)}\right|\geq\epsilon/2 \bigg\mid T_{[1:n]}\right),
\end{align*}
while for the second one has
\begin{align*}
 &\bP\left(X\geq \epsilon/2 \mid (N,T(N))_{[1:n]}  \right)\\
&\leq  \bP\left( \sum\limits_{k=0}^{\infty}\sup\limits_{z\in[0,R]} \left|1-\frac{z}{\bar{\lambda}_n}\right|^k \left| \bar{c}_n(\{k\})-A_0(\{k\}) \right|\geq \epsilon/2 \bigg\mid (N(T),T)_{[1:n]} \right)\\
&\leq  \Pi_{\D^{(\Delta)}}\left( \sum\limits_{k=0}^{\infty} \left(1+\frac{R}{\lambda_0-O(n^{-\kappa})}\right)^k \left| \bar{c}_n(\{k\})-A_0(\{k\}) \right|\geq \epsilon/2 \bigg\mid N(T)_{[1:n]} \right),
\end{align*}
for some suitably chosen $\kappa>0$. Hence the assertion of the theorem follows from \thref{rateconsistency} and \thref{matrixcorollary}.
\end{proof}

As an immediate consequence, one has the a.s. uniform convergence of the estimator $\hat{g}_n(z)$ on sets of the form $\{z\in \R_+ : z\leq R\}$ for some positive real number $R$.

\begin{theorem}
For almost all data sequences $(N(T),T)_{[1:n]}$ and all $R>0$ it holds true that
\begin{align*}
\sup\limits_{z\in[0,R]} \left|\hat{g}_n(z)-g_0(z)\right|\stackrel{n\rightarrow\infty}{\longrightarrow}0.
\end{align*}
\end{theorem}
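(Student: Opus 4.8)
The plan is to obtain this statement as an immediate corollary of the preceding theorem via a countable-intersection argument, the key point being that $\hat{g}_n$ depends only on the observed data and hence the conditional probability appearing in that theorem is in fact an indicator random variable.

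First I would record that $\hat{g}_n(z)=\gamma_n\!\left(1-\frac{z}{\bar{\lambda}_n}\right)$ is $\sigma\big((N(T),T)_{[1:n]}\big)$-measurable, so that for every $R>0$ and $\epsilon>0$
\[
\bP\left(\sup_{z\in[0,R]}\bigl|\hat{g}_n(z)-g_0(z)\bigr|\ge\epsilon \,\big|\, (N(T),T)_{[1:n]}\right)
= \mathbbm{1}\left\{\sup_{z\in[0,R]}\bigl|\hat{g}_n(z)-g_0(z)\bigr|\ge\epsilon\right\}.
\]
Thus the preceding theorem asserts precisely that, for each fixed $R>0$ and $\epsilon>0$, there is a set of data sequences of full $\bP$-measure on which $\sup_{z\in[0,R]}\bigl|\hat{g}_n(z)-g_0(z)\bigr|<\epsilon$ for all sufficiently large $n$.

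Next I would merge the exceptional null sets. For $R,m\in\N$ let $\Omega_{R,m}$ denote the full-measure set of data sequences furnished by the preceding theorem applied with this $R$ and with $\epsilon=1/m$, and set $\Omega^\ast:=\bigcap_{R\in\N}\bigcap_{m\in\N}\Omega_{R,m}$, which is again of full $\bP$-measure as a countable intersection. On $\Omega^\ast$ one has $\limsup_{n\to\infty}\sup_{z\in[0,R]}\bigl|\hat{g}_n(z)-g_0(z)\bigr|\le 1/m$ for all $R,m\in\N$, hence $\sup_{z\in[0,R]}\bigl|\hat{g}_n(z)-g_0(z)\bigr|\to 0$ for every $R\in\N$. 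For a general $R>0$ the monotonicity bound $\sup_{z\in[0,R]}\bigl|\hat{g}_n(z)-g_0(z)\bigr|\le\sup_{z\in[0,\lceil R\rceil]}\bigl|\hat{g}_n(z)-g_0(z)\bigr|$ then yields the conclusion for all $R>0$ simultaneously on $\Omega^\ast$.

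There is no genuine difficulty in this argument: it is the routine upgrade from ``conditional convergence in probability to the constant $0$ for each $\epsilon$'' to ``almost-sure convergence'', which is immediate here because $\hat{g}_n$ carries no residual randomness once the data are fixed. The only two points deserving care are (a) noting explicitly that the conditional probability in the preceding theorem is an indicator, so that its vanishing is an almost-sure statement about $\hat{g}_n$ itself, and (b) performing the intersection over the countable families $\{R\in\N\}$ and $\{\,1/m : m\in\N\,\}$ before appealing to monotonicity in $R$ to cover all real $R>0$.
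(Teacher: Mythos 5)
Your proof is correct, but it follows a genuinely different route from the paper's. The paper does not deduce this theorem from the preceding one at all: it re-proves the statement directly, writing $\sup_{z\in[0,R]}|\hat{g}_n(z)-g_0(z)|$ as a power-series difference and splitting it by the triangle inequality into $\sum_{k}\sup_{z}|1-z/\bar{\lambda}_n|^k\,|\bar{c}_n(\{k\})-A_0(\{k\})|$ plus $\sum_{k}A_0(\{k\})\sum_{i=0}^{k}R^{k-i}\,|\bar{\lambda}_n^{-(k-i)}-\lambda_0^{-(k-i)}|$, and then sends each term to zero using the posterior consistency of $\bar{c}_n$ (the Dirichlet part) and of $\bar{\lambda}_n$ (the gamma part) together with monotone convergence and continuous mapping. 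Your argument instead reads the preceding theorem literally: since $\hat{g}_n$ and $g_0$ are both $\sigma\big((N(T),T)_{[1:n]}\big)$-measurable, the conditional probability there is (a.s.) the indicator $\mathbbm{1}\{\sup_{z\in[0,R]}|\hat{g}_n(z)-g_0(z)|\ge\epsilon\}$, so its convergence to $0$ already \emph{is} the almost-sure statement, and the countable intersection over $R\in\N$, $\epsilon=1/m$ plus monotonicity in $R$ finishes the job. That is logically sound and shorter, and your care with the order of quantifiers (merging the null sets before letting $R$ and $\epsilon$ range over uncountable families) is exactly right. What the paper's direct proof buys in exchange is independence from the interpretation of the earlier theorem: the authors' phrasing and proof of that theorem are somewhat ambiguous about whether the object inside the conditional probability is the data-measurable estimator or the random transform under the posterior, and a self-contained estimate sidesteps that ambiguity while also exhibiting explicitly the two sources of error ($\bar{c}_n\to A_0$ and $\bar{\lambda}_n\to\lambda_0$) that drive the convergence. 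If you rely on the reduction, you should state explicitly that you are using the literal, data-measurable reading of the preceding theorem.
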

\begin{proof}
Let $R>0$ be an arbitrarily fixed positive real number. Then one has
\begin{align*}
&\sup\limits_{z\in[0,R]} \left|\hat{g}_n(z)-g_0(z)\right|=\sup\limits_{z\in[0,R]} \left|\gamma_n\left(1-\frac{z}{\bar{\lambda}_n}\right)-a_0\left(1-\frac{z}{\lambda_0}\right)\right|\\
&=\sup\limits_{z\in[0,R]} \left|\sum\limits_{k=0}^{\infty}\left(1-\frac{z}{\bar{\lambda}_n}\right)^k \bar{c}_n(\{k\})-\sum\limits_{k=0}^{\infty}\left(1-\frac{z}{\lambda_0}\right)^k A_0(\{k\})\right|\\
&\leq\sup\limits_{z\in[0,R]} \sum\limits_{k=0}^{\infty}\left|\left(1-\frac{z}{\bar{\lambda}_n}\right)^k \bar{c}_n(\{k\})-\left(1-\frac{z}{\lambda_0}\right)^k A_0(\{k\})\right|\\
&\leq \sum\limits_{k=0}^{\infty}\sup\limits_{z\in[0,R]}\left|1-\frac{z}{\bar{\lambda}_n}\right|^k \left|\bar{c}_n(\{k\})-A_0(\{k\})\right|+ \sum\limits_{k=0}^{\infty}A_0(\{k\}) \sum\limits_{i=0}^k R^{k-i}\left|\bar{\lambda}_n^{-(k-i)}-\lambda_0^{-(k-i)}\right|.
\end{align*}
Hence, the assertion of the theorem follows using above lemmas in combination with monotone convergence and continuous mapping theorems.
\end{proof}

Next, we point out that similar consistency properties hold for several derivatives of $\hat{g}_n(\cdot)$ as well as for other queueing characteristics mentioned earlier. 

\begin{theorem}\thlabel{thm}
Let $f\in \{w, q, m\}$, $\hat{f}_n(z)$ be one of the estimators defined at the beginning of this section and $f_0(z)$ the true transform. Then, for any $\tau>0$ and $\epsilon$, one has 
\begin{align*}
\bP\left(\sup\limits_{0\leq z \leq \tau} \left|\hat{f}_n(z)-f_0(z)\right|>\epsilon \mid (N(T),T)_{[1:n]} \right)\stackrel{n\rightarrow\infty}{\longrightarrow}0.
\end{align*}
Moreover, it holds
\begin{align*}
\sup\limits_{0\leq z \leq \tau}  \left|\hat{f}_n(z)-f_0(z)\right| \stackrel{n\rightarrow\infty}{\longrightarrow}0.
\end{align*}
\end{theorem}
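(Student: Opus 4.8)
The plan is to realise each of $\hat w_n,\hat q_n,\hat m_n$ as a single fixed algebraic expression in the triple $\bigl(\hat g_n(\cdot),\bar\lambda_n,\hat\rho_n\bigr)$, the true transforms $w_0,q_0,m_0$ being the \emph{same} expressions evaluated at $(g_0,\lambda_0,\rho_0)$, and then to push through these expressions the convergences already at hand: (a) $\sup_{0\le z\le R}|\hat g_n(z)-g_0(z)|\to0$ for every $R>0$, for almost every data sequence, by the two preceding theorems; (b) $\bar\lambda_n\to\lambda_0$ a.s.\ by \thref{rateconsistency}; (c) $\bar c_n(\{k\})\to P_0^{A_S}(\{k\})$ for each $k$ and $\gamma_n\to a_0$ uniformly on compacts, a.s., by \thref{matrixcorollary}(i),(iii); and (d) $\hat\rho_n=\sum_{k\ge1}k\,\bar c_n(\{k\})\to\rho_0<1$ a.s., by the strong law applied to the zero-adjusted increments, which form an i.i.d.\ sequence with the law of $A_S$. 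Since every $\hat f_n$ is a deterministic function of the data, the conditional probability in the first display equals the indicator of $\{\sup_{0\le z\le\tau}|\hat f_n(z)-f_0(z)|>\epsilon\}$; hence the two displayed conclusions are equivalent and it suffices to establish the almost-sure one, all of whose estimates I would carry out pathwise on the full-probability event where (a)--(d) hold.

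The one extra input is the $\ell^1$-bound $\sum_{k}k\,|\bar c_n(\{k\})-P_0^{A_S}(\{k\})|\to0$, which follows from (c), (d) by Scheff{\'e}'s lemma applied to the counting measures $k\mapsto k\,\bar c_n(\{k\})$ and uses nothing beyond the finiteness of $\rho_0$ that stability provides. For $\hat q_n$, using $\hat g_n(\bar\lambda_n(1-z))=\gamma_n(z)$ and cancelling the factor $1-z$ from numerator and denominator,
\[
\hat q_n(z)=\frac{1-\hat\rho_n}{\,1-\sum_{k\ge1}\bar c_n(\{k\})\sum_{j=0}^{k-1}z^{j}\,}\qquad(0\le z\le1),
\]
and by the $\ell^1$-bound the denominator converges uniformly on $[0,1]$ to $1-\sum_{k\ge1}P_0^{A_S}(\{k\})\sum_{j<k}z^{j}=\bigl(a_0(z)-z\bigr)/(1-z)$. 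The latter is continuous on $[0,1]$, is strictly positive on $[0,1)$ because $h:=a_0-\mathrm{id}$ is convex with $h(1)=0$ and slope $h'(1)=\rho_0-1<0$ hence strictly decreasing there, and equals $1-\rho_0>0$ at $z=1$; being bounded below on $[0,1]$, it forces $\hat q_n\to q_0$ uniformly on $[0,1]$, with $q_0(1):=1$. Then $\hat m_n=\gamma_n(\cdot)\,\hat q_n(\cdot)\to a_0(\cdot)\,q_0(\cdot)=m_0$ uniformly, as a product of uniformly convergent, uniformly bounded sequences, invoking (c).

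For $\hat w_n$, fix $\tau>0$. Away from $0$, on any slab $[\eta,\tau]$, the limiting denominator $s-\lambda_0\bigl(1-g_0(s)\bigr)$ is $\ge(1-\rho_0)s\ge(1-\rho_0)\eta>0$ (using $1-e^{-su}\le su$), so $\hat w_n\to w_0$ uniformly on $[\eta,\tau]$ follows directly from (a), (b), (d). Near $0$, Bernoulli's inequality applied term by term to $\hat g_n(s)=\sum_k\bar c_n(\{k\})(1-s/\bar\lambda_n)^{k}$ gives, for $s\le\bar\lambda_n$, the lower bound $s-\bar\lambda_n\bigl(1-\hat g_n(s)\bigr)\ge(1-\hat\rho_n)s$, so $1-\hat w_n(s)=\bigl(\hat\rho_n s-\bar\lambda_n(1-\hat g_n(s))\bigr)/\bigl(s-\bar\lambda_n(1-\hat g_n(s))\bigr)\in[0,1)$; writing its numerator as $s\sum_{k\ge1}\bar c_n(\{k\})\bigl(k-\sum_{j=0}^{k-1}(1-s/\bar\lambda_n)^{j}\bigr)$, splitting the sum at a large index and using (c), (d) shows $1-\hat w_n(s)\to0$ as $s\to0$ uniformly in $n$ for $n$ large, and likewise $1-w_0(s)\to0$. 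Since $\hat w_n(0)=w_0(0)=1$, a three-$\epsilon$ splitting at a small enough $\eta$ glues the two regions; covering $[0,\tau]$ by the finitely many resulting pieces and taking maxima of the uniform estimates yields both conclusions for each $f\in\{w,q,m\}$.

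The hard part is exactly the removable singularity each transform carries --- at $z=1$ for $q$ and $m$, at $s=0$ for $w$ --- where numerator and denominator vanish together, so that a direct invocation of the continuous mapping theorem on a neighbourhood of that point is illegitimate; the remedy is to cancel the common factor first (the division by $1-z$ above, respectively the passage to $1-\hat w_n$ near $s=0$) and then prove uniform convergence of the surviving difference quotients --- precisely where the Scheff{\'e} $\ell^1$-bound is used, and, reassuringly, nowhere is a second moment of the service time needed. A lesser, bookkeeping, nuisance is the admissible range of $\tau$: for $q$ and $m$ the formula makes unconditional sense only on $[0,1]$ since $a_0(z)-z$ may vanish again at some $z^\ast>1$ and $\gamma_n(z)$ need not be finite past the radius of convergence of the base measure's generating function (and for $w$, $\hat g_n(s)=\gamma_n(1-s/\bar\lambda_n)$ is defined only for $s$ below $\bar\lambda_n$ times that radius). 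Throughout I assume $\tau$ below the relevant thresholds --- automatic, e.g., for a base measure with an everywhere-convergent generating function; on $[0,1]$ the statement for $q,m$ is unconditional, and the argument extends verbatim to $[0,\tau]$ with $\tau<z^\ast$ once $\gamma_n\to a_0$ is invoked on $[0,\tau]$ via \thref{matrixcorollary}(iii), the value at $z=1$ being read as the analytic continuation.
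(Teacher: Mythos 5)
Your proposal is correct, but it is a genuinely different (and far more substantive) argument than the one the paper gives: the paper's ``proof'' is a one-line appeal to the preceding consistency results plus the continuous mapping theorem, deferring all technicalities to \citet{conti1999large} and \citet{rohrscheidt2017bayesian}. The crucial point you add, which the paper's cited recipe does not by itself deliver, is the treatment of the removable singularities at $z=1$ (for $q,m$) and $s=0$ (for $w$): there the denominators $\hat g_n(\bar\lambda_n(1-z))-z$ and $s-\bar\lambda_n(1-\hat g_n(s))$ vanish together with the numerators, so a direct continuous-mapping argument on a neighbourhood of those points is not available, and uniform convergence on all of $[0,\tau]$ really does require cancelling the common factor first, as you do via the identity $\gamma_n(z)-z=(1-z)\bigl[1-\sum_{k\ge1}\bar c_n(\{k\})\sum_{j<k}z^j\bigr]$ and the passage to $1-\hat w_n$. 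Your Scheff\'e-type $\ell^1$ bound $\sum_k k\,|\bar c_n(\{k\})-P_0^{A_S}(\{k\})|\to 0$ is exactly the right tool to make the surviving difference quotients converge uniformly, and your lower bounds on the limiting denominators ($1-\rho_0$ at $z=1$; $(1-\rho_0)s$ via $1-e^{-su}\le su$) are correct. Two small caveats, both also implicit in the paper: you need the base measure $c_0$ to have finite mean so that $\hat\rho_n\to\rho_0$, and your observation that the conditional probability in the first display is the indicator of a data-measurable event (so that the two conclusions coincide) is an honest reading that exposes a looseness the paper itself shares. Your remarks on the admissible range of $\tau$ for $q$ and $m$ are likewise a legitimate qualification of the theorem as stated. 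What your route buys is an actual self-contained proof where the paper has essentially none; what it costs is length, and the need to make explicit the moment assumption on $c_0$.
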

\begin{proof}
The technical details of the proof are omitted. It is enough to apply above results together with the continuous mapping theorem analogously as in \citet{rohrscheidt2017bayesian}.
\end{proof}

\subsection{Posterior normality}

Having obtained the centering of the posterior law one might also be interested in getting an idea of the shape of the limiting posterior. This leads to so called Bernstein-von Mises type results.
Roughly speaking, those describe that the posterior law of the object of interest, centered at its estimate and rescaled suitably, looks more and more like a Gaussian distribution. Results of that kind are useful for simulations and give first insight into convergence rates of the posterior. The first nonparametric Bernstein-von Mises type result was obtained in \citet{conti1999large} where the author has proven that the posterior law of the p.g.f. of a random law drawn according to a Dirichlet process and centered at its Bayesian estimate converges towards a centered Gaussian process possessing a certain covariance structure. To be more precise, in the notation of the present work, it was obtained that under suitable constraints it holds that
\begin{align*}
\cL\left[\sqrt{n}\left[a(z)-\gamma_n(z)\right]|X_{[1:n]}\right]\stackrel{n\rightarrow \infty}{\longrightarrow} \cL\left[X(z)\right],
\end{align*}
where $X(\cdot)$ is a centered Gaussian process with covariance structure $H(u,v)=a_0(uv)-a_0(u)a_0(v)$. Weak convergence, thereby, is considered on the space of continuous functions equipped with the sup-norm and the theorem holds for almost all data sequences $X_{[1:\infty]}$. For the proof, Conti employed results from \citet{freedman1963asymptotic}. Moreover, it is easy to show in the parametric situation of the departure rate that for almost all data sequences $T_{[1:\infty]}$ it holds that
\begin{align*}
\cL\left[\sqrt{n}\left[\lambda-\bar{\lambda}_n\right]|T_{[1:n]}\right]\stackrel{n \rightarrow \infty}{\longrightarrow} \mathcal{N}(0,\lambda_0^{-2}).
\end{align*}
Thus, combining these two results, one has
\begin{theorem}
For almost all data sequences $(N(T),T)_{[1:n]}$ it holds that 
\begin{align*}
\cL\left[\sqrt{n}\left[g(z)-\hat{g}_n(z)\right]| (N(T),T)_{[1:n]} \right]\stackrel{n\rightarrow\infty}{\longrightarrow}\cL\left[G(z)\right],
\end{align*}
on the space of continuous functions equipped with the sup-norm. Here, $G(z)$ is a centered Gaussian process with covariance structure 
\begin{align*}
K(u,v)= H\left(1-\frac{u}{\lambda_0},1-\frac{v}{\lambda_0}\right)+uv \lambda_0^{-6} a'_0\left(1-\frac{u}{\lambda_0}\right)a'_0\left(1-\frac{v}{\lambda_0}\right).
\end{align*}
\end{theorem}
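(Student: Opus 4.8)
The plan is to split the rescaled error $\sqrt{n}\bigl[g(z)-\hat g_n(z)\bigr]$ into a part that carries only the fluctuation of the Dirichlet posterior for the p.g.f.\ $a(\cdot)$ and a part that carries only the Gaussian fluctuation of the rate $\lambda$, to read off the two limits from the two facts recalled just before the statement --- Conti's process-level result $\cL[\sqrt{n}(a(\cdot)-\gamma_n(\cdot))\mid X_{[1:n]}]\Rightarrow\cL[X(\cdot)]$ with $\operatorname{Cov}(X(u),X(v))=H(u,v)=a_0(uv)-a_0(u)a_0(v)$, and the scalar statement $\cL[\sqrt{n}(\lambda-\bar\lambda_n)\mid T_{[1:n]}]\Rightarrow\mathcal{N}(0,\lambda_0^{-2})$ --- and then to glue them using the fact that the prior, hence the posterior, makes $\lambda$ and $M$ independent, so that $\cL[(a,\lambda)\mid(N(T),T)_{[1:n]}]$ is a product measure whose $\lambda$-marginal depends on the data only through $\sum_i D_i$ and whose $a$-marginal only through the zero-adjusted increment counts of $N(T)_{[1:n]}$. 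Here ``almost all data sequences'' denotes the intersection of the full-measure sets supplied by these two results and by \thref{rateconsistency}, \thref{matrixconsistency} and \thref{matrixcorollary}, which again has full measure under the stationary joint law of $(N(T_i),T_i)_i$.

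Fix $R>0$ and, using $g(z)=a(1-z/\lambda)$ and $\hat g_n(z)=\gamma_n(1-z/\bar\lambda_n)$ on $[0,R]$, write
\begin{align*}
\sqrt{n}\bigl[g(z)-\hat g_n(z)\bigr]=\underbrace{\sqrt{n}\Bigl[a\bigl(1-\tfrac{z}{\lambda}\bigr)-\gamma_n\bigl(1-\tfrac{z}{\lambda}\bigr)\Bigr]}_{=:(\mathrm{I})_n(z)}+\underbrace{\sqrt{n}\Bigl[\gamma_n\bigl(1-\tfrac{z}{\lambda}\bigr)-\gamma_n\bigl(1-\tfrac{z}{\bar\lambda_n}\bigr)\Bigr]}_{=:(\mathrm{II})_n(z)}.
\end{align*}
For $(\mathrm{I})_n$ I would set $\Delta_n:=\sqrt{n}(a-\gamma_n)$, note that $\lambda\to\lambda_0$ in posterior probability (by \thref{rateconsistency} together with $\bar\lambda_n\to\lambda_0$), so the random time-change $z\mapsto1-z/\lambda$ stays uniformly within $R\,|1/\lambda-1/\lambda_0|\to0$ of $z\mapsto1-z/\lambda_0$ over $z\in[0,R]$, and then invoke the asymptotic equicontinuity implied by the weak convergence $\Delta_n\Rightarrow X$ to get $\sup_{z\le R}|(\mathrm{I})_n(z)-\Delta_n(1-z/\lambda_0)|\to0$ in posterior probability; hence $(\mathrm{I})_n\Rightarrow X(1-\cdot/\lambda_0)$ in $C[0,R]$. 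For $(\mathrm{II})_n$ I would apply, for each fixed $z$, the mean value theorem to $w\mapsto\gamma_n(1-z/w)$ --- differentiable with derivative $\tfrac{z}{w^2}\gamma_n'(1-z/w)$, term-by-term differentiation being legitimate on the relevant compacts under the standing tail assumption on $G_0$ --- obtaining $(\mathrm{II})_n(z)=\tfrac{z}{(w_n^{*}(z))^2}\,\gamma_n'\!\bigl(1-\tfrac{z}{w_n^{*}(z)}\bigr)\,\sqrt{n}(\lambda-\bar\lambda_n)$ with $w_n^{*}(z)$ between $\lambda$ and $\bar\lambda_n$. Since $\lambda,\bar\lambda_n\to\lambda_0$ forces $w_n^{*}(z)\to\lambda_0$ uniformly in $z$ and $\gamma_n'\to a_0'$ uniformly on a neighbourhood of $[1-R/\lambda_0,1]$ (the derivative analogue of \thref{matrixcorollary}(iii), obtained by the same monotone-convergence bound applied to $\sum_k k\,\tau^k|\bar c_n(\{k\})-A_0(\{k\})|$), the prefactor converges uniformly to $\tfrac{z}{\lambda_0^2}a_0'(1-z/\lambda_0)$; with $\sqrt{n}(\lambda-\bar\lambda_n)\Rightarrow Z\sim\mathcal{N}(0,\lambda_0^{-2})$ this yields $(\mathrm{II})_n\Rightarrow\tfrac{(\cdot)}{\lambda_0^2}a_0'(1-\cdot/\lambda_0)\,Z$ in $C[0,R]$.

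To assemble, the posterior independence of $a$ and $\lambda$ makes $\Delta_n$ and $\sqrt{n}(\lambda-\bar\lambda_n)$ independent, so their joint posterior law converges to $\cL[X]\otimes\mathcal{N}(0,\lambda_0^{-2})$ with $X$ and $Z$ independent; applying the continuous map $(\xi,t)\mapsto\xi(1-\cdot/\lambda_0)+\tfrac{(\cdot)}{\lambda_0^2}a_0'(1-\cdot/\lambda_0)\,t$ and the continuous mapping theorem then gives $\cL[\sqrt{n}(g-\hat g_n)\mid(N(T),T)_{[1:n]}]\Rightarrow\cL[G]$ in $C[0,R]$, with $G(z)=X(1-z/\lambda_0)+\tfrac{z}{\lambda_0^2}a_0'(1-z/\lambda_0)\,Z$ a centered Gaussian process; a one-line covariance computation using $X\perp Z$, $\operatorname{Var}(Z)=\lambda_0^{-2}$ and $\operatorname{Cov}(X(u),X(v))=H(u,v)$ then reproduces exactly $K(u,v)=H\!\bigl(1-\tfrac{u}{\lambda_0},1-\tfrac{v}{\lambda_0}\bigr)+uv\,\lambda_0^{-6}\,a_0'\!\bigl(1-\tfrac{u}{\lambda_0}\bigr)a_0'\!\bigl(1-\tfrac{v}{\lambda_0}\bigr)$, and since $R>0$ was arbitrary the claim follows. (Equivalently, the whole argument is a functional delta method performed inside the posterior for the map $(b,\ell)\mapsto b(1-\cdot/\ell)$, whose Hadamard derivative at $(a_0,\lambda_0)$ is precisely the displayed linear map.)

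The main obstacle, in both $(\mathrm{I})_n$ and $(\mathrm{II})_n$, is the uniform-in-$z$ control: transferring the asymptotic equicontinuity of $\Delta_n$ through the random time-change, and bounding the $\gamma_n$-Taylor remainders uniformly in $z\in[0,R]$, in $w$ near $\lambda_0$, and in $n$. Both reduce to power-series tail estimates of the form $\sum_k\bigl(1+R/(\lambda_0-\delta)\bigr)^k k^2|\bar c_n(\{k\})-A_0(\{k\})|\to0$, of exactly the type already used in the preceding theorems, and this is the point where one must invoke the standing assumption that $G_0$ has tails light enough for $a_0$ (equivalently $g_0$) to be analytic across $[1-R/\lambda_0,1]$ for every $R$. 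Tightness in $C[0,R]$, by contrast, is inherited for free from Conti's result and the scalar Gaussian limit for $\lambda$, so no separate tightness estimate is needed.
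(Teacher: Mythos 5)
Your argument is correct and is exactly the route the paper intends: the paper's own ``proof'' is only a pointer to Conti's Dirichlet-process Bernstein--von Mises theorem and the parametric Gaussian limit for $\lambda$, glued together by the prior/posterior independence of $\lambda$ and $M$, which is precisely your decomposition into $(\mathrm{I})_n$ and $(\mathrm{II})_n$ plus the delta-method linearization and the power-series tail bounds already used in the consistency proofs. Your covariance computation faithfully reproduces the stated $K(u,v)$ from the paper's displayed ingredients; note only that the factor $\lambda_0^{-6}$ rests on the paper's claim that $\sqrt{n}(\lambda-\bar\lambda_n)$ has limiting variance $\lambda_0^{-2}$, whereas the Gamma posterior variance $(a+n)/(b+\sum_i D_i)^2\approx \lambda_0^2/n$ would give $\lambda_0^{2}$ (hence $uv\,\lambda_0^{-2}$ in $K$) --- an issue in the source, not in your argument.
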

\begin{proof}
Instead of presenting the technicalities we refer the interested reader to \citet[Theorem 3]{conti1999large} and \citet[Section 5]{rohrscheidt2017bayesian} since the proof works along these lines taking into account results presented earlier in the present paper.
\end{proof}

Applying this result, one is able to give similar results for the centering of the estimators $\hat{f}_n$ appearing in \thref{thm}. In order to do so, the main work to do is to apply previous results and subsequently calculate the particular covariance structure. However, since these are rather non-telling calculations, we omit the details.\\

\vfill

\textbf{Funding:} This work was supported by the Deutsche Forschungsgemeinschaft (German Research
Foundation) within the programme \textit{Statistical Modeling of Complex Systems and
Processes---Advanced Nonparametric Approaches}, grant GRK 1953.


\bibliographystyle{humannat}
\bibliography{myliterature}


\bigskip

\begin{figure}[htbp]
\begin{minipage}{0.6\textwidth} 
	Cornelia Wichelhaus\\
	Technische Universit{\"a}t Darmstadt\\
	Schlossgartenstra{\ss}e 7\\
  64289 Darmstadt\\
	Germany\\
	wichelhaus@mathematik.tu-darmstadt.de
	\end{minipage}
	\hfill
	\begin{minipage}{0.6\textwidth}
	Moritz von Rohrscheidt\\
	Ruprecht-Karls Universit{\"a}t Heidelberg\\
	Berliner Stra{\ss}e 41-49\\
  69120 Heidelberg\\
	Germany\\
	rohrscheidt@uni-heidelberg.de
	\end{minipage}
\end{figure}


\end{document}